\documentclass[11pt]{amsart}
\usepackage{amssymb}
\usepackage{amsmath}
\usepackage{amsthm}
\usepackage{times}
\usepackage{graphicx}

\usepackage[curve,all]{xy}
 \xyoption{curve}
 \xyoption{line}
\xyoption{arc}


\usepackage{color}
\usepackage{amsthm}
\newtheorem{theorem}{Theorem}[section]
\newtheorem{lemma}[theorem]{Lemma}

\newtheorem{proposition}[theorem]{Proposition}

\theoremstyle{definition}

\theoremstyle{remark}
\newtheorem{remark}[theorem]{Remark}
\newtheorem{conjecture}[theorem]{Conjecture}

\numberwithin{equation}{section}

\newcommand{\calC}{\mathcal{C}}
\newcommand{\calD}{\mathcal{D}}

\newcommand{\calH}{\mathcal{H}}

\newcommand{\calM}{\mathcal{M}}
\newcommand{\calN}{\mathcal{N}}
\newcommand{\calO}{\mathcal{O}}

\newcommand{\calX}{\mathcal{X}}

\newcommand{\bbC}{\mathbb{C}}

\newcommand{\bbP}{\mathbb{P}}
\newcommand{\bbE}{\mathbb{E}}
\newcommand{\bbQ}{\mathbb{Q}}
\newcommand{\bbR}{\mathbb{R}}

\newcommand{\bbZ}{\mathbb{Z}}

\newcommand{\la}{\langle}
\newcommand{\ra}{\rangle}

\newcommand{\GL}{\textup{GL}}

\def\Aut{{\text{Aut}}}

\def\Pic{{\text{Pic}}}
\def\Ker{{\text{Ker}}}

\def\PGL{{\text{PGL}}}

\def\cusp{{\text{cusp}}}
\def\En{{\text{En}}}
\def\Co{{\text{Co}}}
\def\deg{{\text{deg}}}
\def\ndeg{{\text{ndeg}}}
\def\nod{{\text{nod}}}

\def\da{\dasharrow}
\def\frako{\mathfrak{o}}
\def\frakC{\mathfrak{C}}
\def\a{{\text{a}}}
\def\Num{{\text{Num}}}

\begin{document}
\title [Coble surfaces and nodal Enriques surfaces] {The rationality of the moduli spaces of Coble surfaces and of nodal Enriques surfaces}

\author{Igor Dolgachev}
\address{Department of Mathematics, University of Michigan, Ann Arbor, MI
48109,USA}
\email{idolga@umich.edu}
\thanks{}

\author{Shigeyuki Kond\=o}
\address{Graduate School of Mathematics, Nagoya University, Nagoya,
464-8602, Japan}
\email{kondo@math.nagoya-u.ac.jp}
\thanks{Research of the second author is partially supported by
Grant-in-Aid for Scientific Research (S), No 22224001, (S), No 19104001}

\begin{abstract}
We prove the rationality of the coarse moduli spaces of Coble surfaces and of nodal Enriques surfaces over the field of complex numbers.
\end{abstract}
\maketitle

\section{Introduction}

The purpose of this note is to prove the rationality of the moduli spaces of Coble surfaces  and of nodal Enriques surfaces over the field of complex numbers. A Coble surface is a rational surface obtained by blowing up 10 nodes of a rational plane curve of degree 6, and an Enriques surface is called nodal if it contains a smooth rational curve. The moduli space of nodal Enriques surfaces is a codimension one subvariety in the 10-dimensional moduli space of Enriques surfaces. When the K3-cover of an Enriques surface degenerates admitting an ordinary double point fixed under an involution, the quotient by the involution is a rational surface obtained from a Coble surface by blowing down  the proper transform of the plane sextic. In this way, the moduli space of Coble surfaces can be identified with a codimension one component of the boundary of the moduli space of Enriques surfaces.

The idea of the proof is similar to the one used by the second author for the proof of rationality of the moduli space of 
Enriques surfaces \cite{Kon}. The K3 surface birationally isomorphic to the double cover of the projective plane branched along the union of a cuspidal plane quintic and its cuspidal tangent contains the lattice $D_8\oplus U$ in its Picard group. It is shown that the moduli space of K3 covers of Enriques surfaces and the moduli space of K3 surfaces admitting this lattice in its Picard group are birationally isomorphic quotients of a bounded symmetric domain of type IV. A similar idea is used here. We prove that the moduli space of K3 
covers of nodal Enriques surfaces (resp. the K3-covers  of Coble surfaces) is birationally isomorphic to the moduli space of K3 surfaces birationally isomorphic to the double cover of the projective plane branched along the  union of a cuspidal plane quintic and its cuspidal tangent line where the quintic 
has an additional double point (resp. the cuspidal tangent line  
 touches the curve  at a nonsingular point). It is easily proved, by using \cite{Mi}, \cite{V}, that the corresponding moduli spaces of plane quintics are rational varieties.

Note that general Enriques and Coble surface are examples of the quotients of a $K3$ surface by a non-symplectic involution which acts identically on the Picard group.  
Recently Ma \cite{Ma} showed the rationality of the moduli spaces of such $K3$ surfaces in many cases.
The case of the  $K3$-covers  of Coble surfaces is one of eight exceptional cases where his methods did not work.

The idea that the moduli spaces of Enriques surfaces (resp. nodal Enriques surfaces, Coble surfaces) should  be related to the moduli space of cuspidal quintics (resp. their special codimension one subvarieties) originates from  some (still hypothetical) purely geometric constructions of the first author which may relate the corresponding moduli spaces. We discuss these constructions in the last two sections of the paper.
\smallskip


\section{Preliminaries}\label{sec2}

A {\it lattice}  is  a free abelian group $L$ of  finite rank equipped with 
a non-degenerate symmetric integral bilinear form $L \times L \to \bbZ$ whose value on a pair $(x,y)$ will be denoted by $x\cdot y$. 
 For $x \in L\otimes \bbQ$, we call $x^2 =x\cdot x$ the {\it norm} of $x$.
For a lattice $L$ and a rational number
$m$, we denote by $L(m)$ the free $\bbZ$-module $L$ with the $\bbQ$-valued bilinear form obtained from the bilinear form of $L$ by multiplication by $m$. The signature of a lattice is the signature of the real vector space $L\otimes \bbR$ equipped with the symmetric bilinear form extended from one on $L$ by linearity. A lattice is called {\it even} if 
$x\cdot x\in 2\bbZ$ 
for all $x\in L$. 
  
We denote by $U$ the even unimodular lattice of signature $(1,1)$, 
and by $A_m, \ D_n$ or $\ E_k$ the even {\it negative} definite lattice defined by
the Cartan matrix of type $A_m, \ D_n$ or $\ E_k$ respectively.  
For an integer $m$, we denote by $\langle m\rangle$ the lattice of rank 1 generated by a vector with norm $m$.  
We denote by $L\oplus M$ the orthogonal direct sum of lattices $L$ and $M$, and by $L^{\oplus m}$ the orthogonal direct sum of $m$-copies of $L$.
For any integer $k$ we denote by $M_k$ the set of $x\in M$ with norm $k$.

We denote by $L_{K3}$ the lattice $E_8^{\oplus 2}\oplus U^{\oplus 3}$. It is isomorphic to the 2-cohomology group $H^2(X,\bbZ)$ of a K3 surface equipped with the structure of a lattice defined by the cup-product. We will refer to $L_{K3}$ as the {\it K3-lattice}.  The lattice $\bbE = E_8\oplus U$ is called the {\it Enriques lattice}. It is isomorphic to the lattice $\Num(S) = \Pic(S)/(K_S)$ of numerical equivalence divisor classes on an Enriques surface $S$.

Let $L$ be an even lattice and let $L^* ={\rm Hom}(L,\bbZ)$ identified with a subgroup  of  $L\otimes \bbQ$ with the extended symmetric bilinear form.  We denote by $A_L$ the quotient
$L^*/L$ and define maps
$$q_L : A_L \to \bbQ/2\bbZ, \quad b_L : A_L \times A_L \to \bbQ/\bbZ$$
by $q_L(x+L) = x\cdot x\ {\rm mod}\ 2\bbZ$ and
$b_L(x+L, y+L) = x\cdot y \ {\rm mod} \ \bbZ$.  
We call $q_L$ the {\it discriminant quadratic form} of $L$ and $b_L$ the {\it discriminant bilinear form}.  
A lattice is called {\it 2-elementary} if its discriminant group is a 2-elementary abelian group. 

Let ${\rm O}(L)$ be the orthogonal group of $L$, that is, the group of isomorphisms of $L$ preserving the bilinear form.
Similarly ${\rm O}(A_L)$ denotes the group of isomorphisms of $A_L$ preserving $q_L$.
There is a natural map
\begin{equation}\label{nat}
\phi:{\rm O}(L) \to {\rm O}(A_L)
\end{equation}
whose kernel is denoted by ${\rm O}(L)^*$.

\section{The moduli spaces of Enriques, nodal Enriques and Coble surfaces}\label{sec3}

First we recall the moduli space of lattice polarized K3 surfaces.
For any even lattice $M$ of signature $(1,r-1)$ primitively embeddable into the K3-lattice $L_{K3}$, one can construct the coarse moduli space 
$\calM_{K3,M}$ (resp.$\calM_{K3,M}^a$)  of isomorphism classes of lattice $M$ polarized (resp. amply polarized) K3 surfaces $X$, i.e. isomorphism classes of pairs $(X,j)$, where $j:M\hookrightarrow {\rm Pic}(X)$ is a primitive lattice embedding such that the image contains a nef and big (resp. ample) divisor class (see \cite{D2}).\footnote{There is some additional technical requirement for the embedding which we refer to loc.cit.} 
Let $N = M_{L_{K3}}^\perp$.  Then the period domain is given by
\begin{equation}\label{period1}
{\calD}(N) = \{ [\omega] \in \bbP(N\otimes \bbC) \ : \  \omega\cdot \omega = 0, \  \omega\cdot \bar{\omega} > 0\}
\end{equation}
which is a disjoint union of two copies of the $20-r$-dimensional bounded symmetric domain of type IV.  
The moduli space is constructed as a quotient
$$\calM_{K3,M} = \calD(N)/{\rm O}(N)^*, \quad  \calM_{K3,M}^{\a} = \bigl(\calD(N)\setminus \calH_{-2}\bigr)/{\rm O}(N)^*,$$
where ${\rm O}(N)^* = \Ker({\rm O}(N)\to {\rm O}(A_{N}))$, and, for any $d\in \bbZ$,
\begin{equation}\label{heegner}
\calH_{-2d} = \bigcup_{\delta\in N_{-2d}}\{[\omega]\in \calD(N) \ : \ \omega\cdot \delta = 0 \}.
\end{equation}
We call $\calH_{-2d}$ the $(-2d)$-{\it Heegner divisor}.
Suppose that $N$ and $M$ satisfy the condition
\begin{equation}\label{surj}
\text{The natural maps  ${\rm O}(N)\to {\rm O}(A_{N}),\ {\rm O}(M)\to {\rm O}(A_{M})$ are surjective}.
\end{equation}
Then
$$\calM_{K3,M}/{\rm O}(A_M) \cong \calD(N)/{\rm O}(N), \quad  \calM_{K3,M}^{\a}/{\rm O}(A_M) = \bigl(\calD(N)\setminus \calH_{-2}\bigr)/{\rm O}(N),$$
are coarse moduli spaces of K3 surfaces wich admit a primitive embedding of $M$ into $\Pic(X)$.

The period point of a marked K3 surface belongs to $\calH_{-2}$ if and only if there exists a primitive embedding of $M\oplus \la -2\ra$ in $\Pic(X)$. The image  of a generator of $\la -2\ra$ will be an effective divisor class $R$ with self-intersection $-2$ such that $R\cdot h = 0$ for every divisor class from the image of $M$. This shows that $X$ does not admit any ample polarization contained in the image of $M$ in $\Pic(X)$. In other words, any nef and ample polarization of $X$ originated from $M$ will blow down $R$ to a double rational point.

Now we consider an Enriques surface $S$.  
Let $\pi:X\to S$ be its K3-cover and let $\sigma$ be the fixed point free involution of $X$.
Then $\pi^*(\Pic(S)) = \pi^*(\Num(S)) \cong \bbE(2)$.   
We take $\bbE(2)$ as $M$ and denote by $N$ the orthogonal complement of $M$ in $L_{K3}$.  Then
\begin{equation}\label{}
N \cong U \oplus \bbE(2).
\end{equation}
Note that $\sigma^*|M = 1_M$ and $\sigma^*N| = -1_N$.  
It is known that any period point $[\omega]$ of the K3-cover $X$ is not contained in $\calH_{-2}$ (e.g. \cite{Na}).
The quotient $\calD(N)/{\rm O}(N)$ is a normal quasi-projective variety of dimension 10, and
$\bigl(\calD(N)\setminus \calH_{-2}\bigr)/{\rm O}(N)$
is the moduli space $\calM_{\En}$ of Enriques surfaces.

Next we consider
{\it nodal} Enriques surfaces, i.e. Enriques surfaces containing a smooth rational curve ($(-2)$-curve, for short). 
Let $C$ be a $(-2)$-curve on an Enriques surface $S$. Then $\pi^{-1}(C)$ splits into the disjoint sum $C_1\cup C_2$ of two $(-2)$-curves. The divisor class $\delta = [C_1-C_2]$ with $\delta^2 = -4$ belongs to $\pi^*(\Pic(S)^\perp)$.   If we consider all $(-2)$-curves on $S$, the corresponding $(-4)$-vectors $\delta$
generate a negative definite lattice $R(2)$ in $U \oplus \bbE(2)$ where $R$ is a root lattice.  The root lattice $R$ is a part of the notion of {\it root invariant}
for Enriques surfaces (see \cite{N3}).
Since any period point $[\omega]$ of the K3-cover $X$ is orthogonal to an algebraic cycle, we obtain that the period $[\omega]$ belongs to $ \calH_{-4}$.
Thus we define the moduli space $\calM_{\En}^{\nod}$ of nodal Enriques surfaces by
\begin{equation}\label{nodmod}
\calM_{\En}^{\nod} = \bigl(\calH_{-4}\setminus \calH_{-2}\bigr)/{\rm O}(U\oplus \bbE(2)),
\end{equation}
where $\calH_{-4}, \calH_{-2}$ are Heegner divisors in the period domain of Enriques surfaces.
It is known that such $(-4)$-vector $\delta$ in $U\oplus \bbE(2)$ is unique up to the orthogonal group ${\rm O}(U\oplus \bbE(2))$, and the orthogonal complement $\delta^{\perp}$
in $U\oplus \bbE(2)$ is isomorphic to 
\begin{equation}\label{latnod}
N = U\oplus \la 4 \ra \oplus E_8(2)
\end{equation}
(see \cite{Na}).  
The orthogonal complement $M = N^{\perp}$ of $N$ in $L_{K3}$ contains $\bbE(2) \oplus \la -4\ra$ as a sublattice of index 2, 
where $\bbE(2) = \pi^*({\rm Pic}(S))$ and $\la -4\ra$ is generated by $\delta$.
Then the quotient $\calD(N)/{\rm O}(N)$ is a 9-dimensional quasi-projective variety.
We call a nodal Enriques surface $S$ is {\it general} if  $R \cong \la -2\ra$, that is, for any two $(-2)$-curves $C, C'$ on $S$, 
$[C_1-C_2] = [C'_1-C'_2]$.
Note that, for a general nodal Enriques surface, the decomposition $\bbE(2) \oplus \la -4\ra$ is unique, that is, it is independent on a choice of $(-2)$-curves.  Hence we have the following.
\begin{proposition}\label{ms2}
Let $N = U\oplus \la 4\ra \oplus E_8(2)$.  Then
the moduli space $\calM_{\En}^{\nod}$ of nodal Enriques surfaces is birationally isomorphic to $\calD(N)/{\rm O}(N)$.
\end{proposition}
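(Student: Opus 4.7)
The plan is to construct a natural birational map $\calD(N)/{\rm O}(N) \dashrightarrow \calM_{\En}^{\nod}$ by fixing a $(-4)$-vector $\delta\in U\oplus\bbE(2)$ and restricting periods to $\delta^\perp$. By the fact cited from \cite{Na} that all $(-4)$-vectors in $U\oplus\bbE(2)$ form a single orbit under ${\rm O}(U\oplus\bbE(2))$, the Heegner divisor $\calH_{-4}$ is the orbit of the single hyperplane section $\delta^\perp\cap\calD(U\oplus\bbE(2))=\calD(N)$. The inclusion of this hyperplane into $\calH_{-4}$ thus descends to a surjection
\[
\calD(N) \twoheadrightarrow \calH_{-4}/{\rm O}(U\oplus\bbE(2)).
\]
Writing $\Gamma$ for the subgroup of ${\rm O}(U\oplus\bbE(2))$ preserving the line $\bbZ\delta$, restriction gives a homomorphism $\rho:\Gamma\to{\rm O}(N)$, and the surjection above factors through an isomorphism $\calD(N)/\rho(\Gamma) \cong \calH_{-4}/{\rm O}(U\oplus\bbE(2))$.

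Next I would compare $\rho(\Gamma)$ and ${\rm O}(N)$. Using Nikulin's theory on the extension of isometries through the discriminant form, $\rho(\Gamma)$ consists of those $\phi\in{\rm O}(N)$ whose induced action on $A_N$ is compatible with the isotropic subgroup defining $U\oplus\bbE(2)$ as an overlattice of $N\oplus\la-4\ra$; in particular, $\rho(\Gamma)$ has finite index in ${\rm O}(N)$. Consequently the natural map $\calD(N)/\rho(\Gamma)\to \calD(N)/{\rm O}(N)$ is a finite surjective map of quasi-projective varieties of the same dimension $9$.

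The crucial final step is generic injectivity. By the remark preceding the proposition, for a \emph{general} nodal Enriques surface (root invariant $R\cong\la-2\ra$) the decomposition $\bbE(2)\oplus\la-4\ra$ of $M$ is canonical; translated to periods, this means that for a generic $[\omega]\in\calD(N)$, the only $(-4)$-vectors in $U\oplus\bbE(2)$ orthogonal to $[\omega]$ are $\pm\delta$. Hence any $g\in{\rm O}(U\oplus\bbE(2))$ identifying two generic points of $\calD(N)$ must preserve $\bbZ\delta$ and so belongs to $\Gamma$. This shows that $\rho(\Gamma)$ and ${\rm O}(N)$ have the same orbits on a Zariski open subset of $\calD(N)$, so the finite map of the previous step is generically one-to-one, hence birational; removing the preimage of $\calH_{-2}$ yields the asserted birational isomorphism with $\calM_{\En}^{\nod}$.

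The main obstacle is this last step: verifying that the locus of periods admitting an \emph{extra} $(-4)$-vector orthogonal to $[\omega]$ is a proper (countable union of Heegner) subvariety of $\calD(N)$. This genericity statement for the root invariant is what cannot be reduced to formal generalities about lattice extensions, and it depends on the detailed analysis carried out in \cite{N3} and \cite{Na}.
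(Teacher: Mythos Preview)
Your overall strategy is the same as the paper's (which is itself only a sketch), and steps (1)--(2) are fine. The gap is in your ``crucial final step''. The generic--root--invariant argument shows exactly this: if $g\in{\rm O}(U\oplus\bbE(2))$ sends a generic point of $\calD(N)$ back into $\calD(N)$, then $g\in\Gamma$. That statement is about elements of the \emph{big} orthogonal group, and it yields the birational isomorphism $\calD(N)/\rho(\Gamma)\cong \calH_{-4}/{\rm O}(U\oplus\bbE(2))$. It says nothing about an arbitrary $\phi\in{\rm O}(N)$, which need not a priori come from ${\rm O}(U\oplus\bbE(2))$ at all. So the sentence ``This shows that $\rho(\Gamma)$ and ${\rm O}(N)$ have the same orbits'' is a non sequitur: you have compared $\rho(\Gamma)$-orbits with ${\rm O}(U\oplus\bbE(2))$-orbits, not with ${\rm O}(N)$-orbits.

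What is actually needed to pass from $\calD(N)/\rho(\Gamma)$ to $\calD(N)/{\rm O}(N)$ is the purely lattice-theoretic fact that $\rho:\Gamma\to{\rm O}(N)$ is \emph{surjective}. This is a short computation, not a genericity argument. Writing $U(2)\subset\bbE(2)$ with basis $e,f$ and $\delta=e-f$, one has $N\oplus\langle\delta\rangle\subset U\oplus\bbE(2)$ of index $2$, and the glue class projects in $A_N\cong\bbZ/4\bbZ\oplus(\bbZ/2\bbZ)^8$ to the element $h=2\cdot(\text{generator of }\bbZ/4\bbZ)$. But $h$ is the \emph{unique} non-zero element of $2A_N$, hence is fixed by every element of ${\rm O}(A_N)$. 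By Nikulin's extension criterion, every $\phi\in{\rm O}(N)$ therefore extends to an isometry of $U\oplus\bbE(2)$ preserving $\bbZ\delta$, i.e.\ $\rho(\Gamma)={\rm O}(N)$. Once you have this, your genericity step (which, contrary to what you say, is the \emph{easy} part: the locus with an extra orthogonal $(-4)$-vector is a countable union of proper Heegner divisors) completes the proof.
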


Finally we consider Coble surfaces.
A {\it Coble surface} is a smooth rational projective surface $S$ such that $|-K_S| = \emptyset$ but $|-2K_S|\ne \emptyset$ (see \cite{DZ}). A classical example of such a surface is the blow-up of the projective plane at the ten nodes of an irreducible plane curve $C$ of degree 6. The sets of 10 points in the plane realized as the nodes of a rational sextics are examples of \emph{special sets} of points in sense of A. Coble \cite{C2} (they were called Cremona special in \cite{Cantat}). They  were first 
studied by A. Coble in \cite{C1}. In this note we will restrict ourselves only with these classical examples. 

Denote by $\calM_{\Co}$ the moduli space of Coble surfaces constructed as a locally closed subvariety of the GIT-quotient of the variety of 10-tuples of points 
in $\bbP^2$ modulo the group $\PGL(3)$. 
By taking the double cover of $\bbP^2$ branched along the plane sextic with 10 nodes, the moduli space  $\calM_{\Co}$ can be described 
as an open set of an arithmetic quotient of a 9-dimensional bounded symmetric domain of type IV.
We briefly recall this. 

Denote by $X$  the  double cover of the Coble surface $S$ branched along the proper transform of the plane sextic $C$.  Then $X$ is a $K3$ surface containing
the divisors $E_0$, $E_1,\ldots, E_{10}$, where $E_0$ is the pullback of a line on $\bbP^2$ and $E_1,\ldots, E_{10}$ are the
inverse images of the exceptional curves over the nodes $p_1,\ldots, p_{10}$ of $C$.  It is easily seen that the corresponding divisor classes $e_0, e_1,\ldots, e_{10}$ generate
the sublattice $M_X$ of ${\rm Pic}(X)$ isomorphic to $M =\la 2\ra \oplus \la -2\ra^{\oplus 10}$.
Note that $M$ is a 2-elementary lattice of signature $(1,10)$ with $A_M \cong (\bbZ/2\bbZ)^{11}$.
The orthogonal complement of $M_X$ in $H^2(X, \bbZ)$, denoted by $N_X$, is a 2-elementary lattice of signature $(2,9)$ with $q_{N_X} = -q_M$
(see \cite{N1}, Corollary 1.6.2).  The isomorphism class of such lattice is uniquely determined by $-q_M$.  Thus 
$N_X$ is  isomorphic to 
\begin{equation}\label{latcob}
N = \la 2 \ra \oplus \bbE(2)
\end{equation}
(see loc.cit., Theorem 3.6.2).  We remark that $M \cong \bbE(2) \oplus \la -2\ra$.

Let $\calD(N)$ be as in \eqref{period1}, where $N$ is the lattice \eqref{latcob}. 
The quotient $\calD(N)/{\rm O}(N)$ is a normal quasi-projective variety of dimension 9.  
The Torelli type theorem for algebraic $K3$ surfaces, due to Piatetskii-Shapiro and Shafarevich \cite{PS}, implies the following
(for more details, see \cite{MS}):

\begin{proposition}\label{ms1}
Let $N = \la 2 \ra \oplus \bbE(2)$.  
Then the moduli space $\calM_{\Co}$ of Coble surfaces is isomorphic to an open subset of $\calD(N)/{\rm O}(N)$.
\end{proposition}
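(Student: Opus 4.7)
The plan is to construct the period map $\calM_{\Co}\to \calD(N)/\O(N)$ via the $K3$ double cover $X \to S$ and then use the global Torelli theorem for $K3$ surfaces to establish injectivity and surjectivity onto an open subset. For a Coble surface $S$, the covering involution $\sigma$ on $X$ acts as $+1$ on $M_X = \la e_0, e_1,\ldots, e_{10}\ra \cong M$ and as $-1$ on $N_X = M_X^{\perp}$. By Nikulin's uniqueness theorem for even 2-elementary lattices of prescribed signature and discriminant form (\cite{N1}, Theorem 3.6.2), $N_X \cong N = \la 2\ra \oplus \bbE(2)$, so the period of $X$ defines a point of $\calD(N)$ well defined modulo $\O(N)$.

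For injectivity, two Coble surfaces $S, S'$ whose $K3$-covers have periods in the same $\O(N)$-orbit admit a Hodge isometry $N_X \to N_{X'}$. Using the surjectivity of the natural map $\O(N)\to \O(A_N)$ for 2-elementary lattices of this type, this extends to a Hodge isometry of the full $K3$-lattices preserving the decomposition $M\oplus N$ and hence compatible with the $\sigma$-actions. After correcting by reflections in effective $(-2)$-classes so that ample classes correspond, the global Torelli theorem (\cite{PS}) yields an isomorphism $X \cong X'$ commuting with the involutions, which descends to $S \cong S'$.

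For surjectivity onto an open subset, let $[\omega]\in\calD(N)$ lie outside a suitable union of Heegner divisors. Surjectivity of the $K3$ period map together with Nikulin's classification of non-symplectic involutions produces a $K3$ surface $X$ with primitive embedding $M\hookrightarrow\Pic(X)$ and a non-symplectic involution $\sigma$ with invariant lattice $M_X$. The $\sigma$-fixed locus is a nonempty curve whose topology is determined by the invariants of the 2-elementary lattice $M$, so the quotient is a smooth rational surface. When $e_1,\ldots,e_{10}$ are represented by pairwise disjoint smooth $(-2)$-curves and the image of the $\sigma$-fixed curve under the linear system $|e_0|$ is an irreducible plane sextic with ten nodes, the quotient recovers a Coble surface obtained by blowing up these nodes.

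The main technical obstacle is characterizing precisely which open subset of $\calD(N)/\O(N)$ corresponds to genuine Coble surfaces. One must excise the Heegner-type divisors on which the sextic degenerates (becoming reducible, acquiring singularities beyond ten simple nodes, or having coinciding nodes) or on which the exceptional classes $e_i$ fail to be represented by effective pairwise disjoint smooth rational curves. Verifying that the complement of these divisors is exactly $\calM_{\Co}$, rather than merely birationally equivalent to it, is where the geometric content of the proposition resides.
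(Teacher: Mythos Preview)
Your proposal is correct and follows essentially the same approach as the paper. The paper itself gives no detailed argument for this proposition, merely invoking the Torelli theorem for algebraic $K3$ surfaces \cite{PS} and referring to \cite{MS} for details; your outline supplies precisely those details (period map via the $K3$-cover, extension of Hodge isometries using surjectivity of $\O(N)\to\O(A_N)$ for the 2-elementary lattice $N$, and surjectivity of the period map onto the complement of suitable Heegner divisors), and your honest acknowledgment that pinning down the exact open subset is where the geometric work lies matches the paper's deliberately vague phrasing ``an open subset.''
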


Note that $N = \la 2\ra \oplus \bbE(2)$ is isomorphic to the orthogonal complement of a $(-2)$-vector in $U \oplus \bbE(2)$.
This implies that the quotient of the $(-2)$-Heegner divisor $\calH_{-2}$ in the period domain of Enriques surfaces by the arithmetic 
subgroup ${\rm O}(U\oplus \bbE(2))$ is birationally isomorphic to the moduli space $\calM_{\Co}$ of Coble surfaces.

\section{Plane quintics with a cusp}\label{sec5}

Let $C$ be a plane quintic curve with a cusp $p$.  Let $L$ be the tangent line of $C$ at the cusp.  We consider the following two cases.

\begin{itemize}
\item [\textbf{Case 1}:] The line $L$ is tangent to $C$ at a smooth point $q$ of $C$. 
\item [\textbf{Case 2}:]  $C$ has an ordinary node $q$. 
\end{itemize}

Let $\calM_{\cusp}$ be the moduli space of cuspidal quintics, that is, the GIT-quotient of the projective space of plane cuspidal curves of 
degree 5 by the group $\PGL(3)$.  The second author proved earlier that $\calM_{\cusp}$ is a rational variety birationally isomorphic to the moduli space $\calM_{\En}$ (see \cite{Kon}). The proof establishes a birational isomorphism between 
$\calM_{\En}$ and the moduli space of K3 surfaces birationally isomorphic to the double covers of $\bbP^2$ branched along a cuspidal quintic. Here we will follow the same strategy replacing $\calM_{\cusp}$ with its codimension 1 subvarieties $\calM_{\cusp}'$ (resp. $\calM_{\cusp}''$) corresponding to quintics from Case 1 (resp. Case 2).

\begin{theorem}\label{Miyata} $\calM_{5,\cusp}'$ and $\calM_{5,\cusp}''$ are  rational varieties of dimension $9$.
\end{theorem}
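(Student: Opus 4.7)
I would adapt the strategy of \cite{Kon} to these two codimension-one subvarieties. The first step is to normalize the $\PGL(3)$-action: place the cusp of $C$ at $p=[0:0:1]$ and the cuspidal tangent as $\ell=\{y=0\}$; in Case 1 place the second tangency point of $\ell$ with $C$ at $[1:0:0]$, and in Case 2 place the additional node at $[0:1:0]$ (generically off $\ell$). A cusp at $[0:0:1]$ with tangent $\{y=0\}$ is exactly the vanishing in $F$ of the coefficients of $z^5$, $xz^4$, $yz^4$, $x^2z^3$, $xyz^3$. In Case 1, tangency at $[1:0:0]$ forces two more coefficients ($x^5$, $x^4 z$) to vanish; in Case 2, the node at $[0:1:0]$ forces three more ($y^5$, $xy^4$, $y^4 z$) to vanish. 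Thus the normalized quintics fill out linear subspaces $V_1\cong\bbP^{13}$ and $V_2\cong\bbP^{12}$ of $\bbP^{20}$.

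The stabilizers of these normalizations in $\PGL(3)$ are the connected solvable groups
\[
G_1 = \left\{\begin{pmatrix} A & B & 0 \\ 0 & E & 0 \\ 0 & H & I \end{pmatrix}\right\}\!\Big/\bbG_m,\qquad G_2 = \left\{\begin{pmatrix} A & 0 & 0 \\ 0 & E & 0 \\ G & 0 & I \end{pmatrix}\right\}\!\Big/\bbG_m
\]
of dimensions $4$ and $3$, respectively. Standard descent identifies $\calM_{5,\cusp}'$ with $V_1/G_1$ and $\calM_{5,\cusp}''$ with $V_2/G_2$ birationally, and $\dim V_i-\dim G_i=9$ in both cases.

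For rationality, after reordering the basis of $\bbC^3$ to $(e_1,e_3,e_2)$ for $G_1$ and $(e_3,e_2,e_1)$ for $G_2$, the lift $\tilde G_i\subset\GL(3)$ becomes an upper-triangular subgroup. Consequently the induced action of $\tilde G_i$ on $\mathrm{Sym}^5(\bbC^3)^*=\bbC[x,y,z]_5$ is upper triangular in the monomial basis induced by the reordered coordinates: the $\tilde G_i$-invariant flag on $\bbC^3$ produces a filtration of the symmetric power whose associated graded is a torus representation. By Miyata's theorem \cite{Mi} (compare also \cite{V}), the invariant field of a connected triangular linear algebraic group acting on a polynomial ring is purely transcendental over the base field. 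Applied to $\tilde G_i$ acting on the affine cone $\tilde V_i$ over $V_i$, this yields that $\bbC(V_i)^{G_i}=\bbC(\tilde V_i)^{\tilde G_i}$ is purely transcendental over $\bbC$ of transcendence degree $9$. Hence $\calM_{5,\cusp}'$ and $\calM_{5,\cusp}''$ are rational of dimension $9$.

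The main obstacle is the concrete triangularity check together with generic freeness. For triangularity, the unipotent radicals of $G_i$ act on monomials by substitutions (of the form $e_2\mapsto e_2+B e_1$ etc.) that strictly raise a fixed lexicographic weight such as $y$-degree, displaying upper triangularity once monomials are ordered by decreasing weight. Generic freeness reduces to the standard fact that a general cuspidal plane quintic has trivial projective automorphism group, so a general $F\in V_i$ has trivial $G_i$-stabilizer and the honest dimension of $V_i/G_i$ is indeed $9$.
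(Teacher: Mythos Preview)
Your proof is correct and follows essentially the same route as the paper: normalize the cusp, the cuspidal tangent, and the auxiliary point to fixed positions, identify the resulting linear space of quintics (of affine dimension $14$ resp.\ $13$) together with its solvable stabilizer in $\GL(3)$, and invoke Miyata--Vinberg to conclude rationality of the quotient. The only cosmetic differences are the coordinate conventions (the paper puts the cusp at $(1{:}0{:}0)$ rather than $(0{:}0{:}1)$) and that you spell out the triangularization step explicitly whereas the paper simply cites solvability.
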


\begin{proof}  We start with Case 1.

Let $C$ be a quintic curve from Case 1. By a linear transformation,  we may choose coordinates $(x_0:x_1:x_2)$ to assume that  $p = (1:0:0)$ is the cusp, $V(t_1)$ is the cuspidal tangent line which touches $C$ at the point $q = (0:0:1)$. Since $p$ is a cusp of $C$ with cuspidal tangent $V(x_1)$, the curve  $C$ is given by an equation of the form
$$ax_0^3x_1^2 + x_0^2A_1(x_1, x_2) + x_0A_2(x_1,x_2) + A_3(x_1,x_2) = 0, \quad a\ne 0$$
where $A_1,A_2,A_3$ are  homogeneous polynomials of degrees 3, 4, and 5, respectively. Plugging in $x_1= 0$, we obtain the binary form 
$x_0^2A_1(0,x_2)+x_0A_2(0,x_2)+A_3(0,x_2)$ in variables $x_0,x_2$. It  must have a zero  at $(0:1)$ of multiplicity 2. This implies that $A_2 = x_1A_2'$ and $A_3 = x_1A_3'$ for some polynomials $A_2', A_3'$ of degrees 3 and 4, respectively. Thus the equation of $C$ can be rewritten in the form
$$F = ax_0^3x_1^2 + x_0^2A_1(x_1, x_2) + x_0x_1A_2'(x_1,x_2) + x_1A_4'(x_1,x_2) = 0, \quad a\ne 0.$$

Let $V$ be the linear subspace of $S^5({\bf C}^3)^*$ consisting of  quintic ternary forms $F$ as above (with $a$ may be equal to zero). The subgroup $G$ of $\GL(4)$ which leaves invariant $V$ consists of  linear transformations
$$x_0\to ax_0 + bx_2, \quad x_1 \to cx_1,  \quad x_2\to dx_1 + ex_2.$$
Then ${\calM'}_{5,\cusp}$ is birational to the quotient  $\bbP(V)/G$.  It follows that the dimension of $\calM'_{5,\cusp}$ is equal to 9.
Note that $G$ is a solvable algebraic group of dimension 5 acting linearly on the linear space $V$ of dimension 14.  The assertion of the rationality now follows follows from a result of Miyata \cite{Mi} and Vinberg  \cite{V}.

The  Case 2 can be argued in the same way.   We may assume that the node does not lie on the cuspidal tangent line. First transform $C$ to a curve such that   $p = (1:0:0)$ is a cusp with the cuspidal tangent line $V(x_1)$ and $q = (0:1:0)$ is a node.
Arguing as above, we find that $C$ can be given by an equation
$$F = ax_0^3x_1^2 + x_0^2A_1(x_1, x_2) + x_0x_2A_2(x_1,x_2) + x_2^2A_3(x_1,x_2) = 0, \quad a\ne 0.$$
Let  $V'$ be a linear subspace of  $S^5({\bf C}^3)^*$ consisting of  quintic ternary forms $F$ as above (with $a$ may be equal to zero). Its dimension is equal to 13. The subgroup $G'$ of $\GL(4)$ leaving $V'$ invariant consists of  projective transformations
$$x_0 \mapsto ax_0+bx_2, \quad x_1\mapsto cx_1, \quad x_2\to dx_2.$$
It is a solvable algebraic group of dimension 4 acting linearly on $V$. The variety  $\calM''_{5,\cusp}$ is birational to the quotient variety $\bbP(V')/G'$.  It follows that the dimension of $\calM''_{5,\cusp}$ is 9.
Invoking the same result of Miyata and Vinberg, we obtain that 
$\calM''_{5,\cusp}$ is rational.
\end{proof}

\section{$K3$ surfaces associated with a plane quintic with a cusp}\label{sec6}

In this section, we shall show that $\calM'_{5,\cusp}$ (resp. $\calM''_{5,\cusp}$) 
is isomorphic to an open subset of an arithmetic quotient of a 9-dimensional bounded symmetric domain of type IV.  

First we consider Case 1. 

Let $C$ be a cuspidal quintic as in Case 1 and let $L$ be the cuspidal tangent. Consider the plane sextic curve $C + L$. Let $p$ be the cusp and $q$ be the a smooth tangency point of $L$ with $C$.  Let $\bar{X}$ be the double cover of $\bbP^2$ branched along $C+L$.
Then $\bar{X}$ has a rational double point of type $E_7$ over $p$ locally isomorphic to $V(z^2+y(y^2+x^3))$ and a rational double point of type $A_3$ over $q$ locally isomorphic to $V(z^2+x(x+y^2))$.  Denote by $X$ the minimal resolution of $\bar{X}$ and by $\tau$ the covering transformation.  
Then $X$ is a $K3$ surface containing 11 smooth rational curves $E_1,\ldots, E_{11}$ with intersection graph pictured below.  
\xy (0,10)*{};(-30,-10)*{};
@={(0,0),(10,0),(20,0),(30,0),(40,0),(50,0),(60,0),(70,0),(80,0), (70,-7),(20,-7)}@@{*{\bullet}};
(0,0)*{};(70,0)*{}**\dir{-};(20,0)*{};(20,-7)*{}**\dir{-};(70,0)*{};(70,-7)*{}**\dir{-};(70,0)*{};(80,0)*{}**\dir{-};
(23,-5)*{E_1};(0,3)*{E_2};(10,3)*{E_3};(20,3)*{E_4};(30,3)*{E_5};(40,3)*{E_6};(50,3)*{E_7};(60,3)*{E_8};(70,3)*{E_9};(80,3)*{E_{10}};(74,-5)*{E_{11}};
\endxy

We see that  
$E_1, \ldots, E_7$ form the intersection graph of type $E_7$,
$E_8$ is the inverse image of $L$ and $E_9, E_{10}, E_{11}$ form the intersection graph of type $A_3$. 
The covering transformation $\tau$ preserves each of $E_1,\ldots, E_9$ and changes $E_{10}$ and $E_{11}$.
Note that the linear system
$$|E_1 + E_3 + 2(E_4 + \cdots + E_9) + E_{10} + E_{11}|$$
defines an elliptic fibration with a singular fiber of type $\tilde{D}_9$, and $E_2$ is a section of this fibration.  
This implies that these 11 curves generate a lattice $M_{X}$ of ${\rm Pic}(X)$ isomorphic to $M = U\oplus D_9$.  Here $U$ is generated by the class of a fiber and the section
$E_2$ and $D_9$ is generated by $E_1, E_4,\ldots, E_{11}$.    
Since the discriminant of $M$ is equal to 4, and  there are no even unimodular lattices with signature $(1,10)$, $M_{X}$ is primitive in $H^2(X, \bbZ)$ and
$M_{X} = {\rm Pic}(X)$ for general $X$.

Let $N_{X}$ be the orthogonal complement of $M_{X}$ in $H^2(X, \bbZ)$.  Then $N_{X}$ has signature $(2, 9)$. 
It follows from  \cite{N1}, Corollary 1.6.2 that $q_{N_{X}} \cong -q_{M}$.  
Note that $A_{M} \cong \bbZ/4\bbZ$.
Also it follows from loc. cit., Theorem 1.14.2 that the isomorphism class of $N_{X}$ is uniquely determined by $q_M$.  
Thus $N_{X}$ is isomorphic to
\begin{equation}\label{lat1}
N = \la 4\ra \oplus U \oplus E_8
\end{equation}
Obviously ${\rm O}(M_{X}) \cong \bbZ/2\bbZ$.
We have the following lemma which is easy to prove.

\begin{lemma}\label{surj1}
The group ${\rm O}(A_{S_{X}})$, and hence ${\rm O}(A_{T_{X}})$, is generated by the covering involution $\tau$.
In particular, the natural maps \eqref{nat}
${\rm O}(M_{X}) \to {\rm O}(A_{M_{X}})$ and ${\rm O}(N_{X}) \to {\rm O}(A_{N_{X}})$
are surjective.
\end{lemma}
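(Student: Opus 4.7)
The plan is to first determine $\mathrm{O}(A_{M_X})$ as an abstract group, then exhibit $\tau^*$ as its generator, and finally transfer the information to $A_{N_X}$ using the discriminant gluing between $M_X$ and $N_X$ inside $L_{K3}$.

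First, since $M = U \oplus D_9$ and $U$ is unimodular, $A_{M_X} \cong A_{D_9} \cong \bbZ/4\bbZ$. The automorphism group of the underlying abelian group is $\{\pm 1\}$, and both elements automatically preserve the discriminant quadratic form since $q(-x) = q(x)$; hence $\mathrm{O}(A_{M_X}) \cong \bbZ/2\bbZ$.

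Next, I would identify the action of $\tau^*$ on $A_{M_X}$. By the description of the covering involution given just before the lemma, $\tau^*$ fixes $E_1, E_2, \ldots, E_9$ and interchanges $E_{10}$ with $E_{11}$. Thus it fixes the section class $E_2$ and the fiber class $f = E_1 + E_3 + 2(E_4 + \cdots + E_9) + E_{10} + E_{11}$, so it acts trivially on the $U$-summand; on $D_9$ it acts by the outer diagram automorphism swapping the two branches at the trivalent vertex, namely the nodes $E_{10}$ and $E_{11}$. A standard computation with the two half-spin classes (which generate $A_{D_n}$ for odd $n$ and are interchanged by this diagram automorphism) shows that the induced action on $A_{D_9} \cong \bbZ/4\bbZ$ is multiplication by $-1$. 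Therefore $\tau^*$ maps to the generator of $\mathrm{O}(A_{M_X})$, giving surjectivity of $\mathrm{O}(M_X) \to \mathrm{O}(A_{M_X})$.

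For the transcendental side, I would invoke Nikulin's gluing: the finite overlattice $L_{K3}/(M_X \oplus N_X)$ is the graph $\Gamma$ of an anti-isometry $\gamma : (A_{M_X}, q_{M_X}) \xrightarrow{\sim} (A_{N_X}, -q_{N_X})$. Since $\tau^*$ preserves both $M_X$ and $N_X$ separately, it preserves $\Gamma$, so $\gamma$ intertwines $\tau^*|_{A_{M_X}}$ with $\tau^*|_{A_{N_X}}$; the previous step then forces $\tau^*|_{A_{N_X}} = -1$. Since $q_{N_X} = -q_{M_X}$, the same elementary computation gives $\mathrm{O}(A_{N_X}) \cong \bbZ/2\bbZ$, generated again by $-1$; hence $\tau^*$ hits the generator and $\mathrm{O}(N_X) \to \mathrm{O}(A_{N_X})$ is surjective as well.

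The only step requiring genuine care is the verification that the outer diagram automorphism of $D_9$ induces $-1$ on $A_{D_9}$; once that is in hand, the remaining assertions reduce to formal consequences of Nikulin's discriminant-form machinery and the explicit geometric description of $\tau^*$ on the basis $E_1,\ldots,E_{11}$.
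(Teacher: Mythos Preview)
Your argument is correct and is exactly the fleshing-out the paper has in mind: the paper records only that $A_M\cong\bbZ/4\bbZ$ (hence $\mathrm{O}(A_{M_X})\cong\bbZ/2\bbZ$) and declares the rest ``easy to prove,'' while you supply the missing verification that the diagram automorphism of $D_9$ induced by $\tau^*$ swapping $E_{10},E_{11}$ acts as $-1$ on $A_{D_9}$, and then transfer this to $A_{N_X}$ via Nikulin gluing. There is no substantive difference in approach.
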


Let $\calD(N)$ be as in \eqref{period1} with the lattice $N$ from \eqref{lat1}. The quotient $\calD(N)/{\rm O}(N)$ is a normal quasi-projective variety of dimension 9.  

 We fix a primitive embedding of $M$ into the K3-lattice $L_{K3}$
with $N = M^{\perp}$.  We also fix a basis $\{e_i\}$ of $M$ which has the same incidence relation as  $\{E_i\}$.
It follows from Lemma \ref{surj1} that there exists an isometry from $H^2(X, \bbZ)$ to $L_{K3}$ sending the classes of $E_i$ to $e_i$. 
This defines a  $M$-lattice polarization on the corresponding $K3$ surface (see \cite{D2}) satisfying condition \eqref{surj}.  
Note that the $M$-marking determines the action of the involution of $\tau$ on $H^2(X,{\bf Z})$: $\tau^*$ acts trivially on $e_1,..., e_9$, 
changes $e_{10}$ and $e_{11}$, and acts on $N$ as $-1$.
Conversely let $(X,j), (X',j')$ be two $M$-amply polarized $K3$ surfaces whose periods coincide in $\calD(N)/{\rm O}(N)$.
We denote by $\tau$ (resp. $\tau'$) the involution of $X$ (resp. $X'$). 
It follows from Lemma \ref{surj1} that there exists a Hodge isometry $\phi : H^2(X,{\bf Z}) \to H^2(X', {\bf Z})$ preserving the $M$-markings.
The Torelli type theorem for algebraic $K3$ surfaces implies 
that there exists an isomorphism $\varphi : X\to X'$ with $\varphi^*=\phi$.  Moreover $\varphi \circ \tau = \tau'\circ \varphi$.  Hence $\varphi$
induces an isomorphism between the corresponding plane quintics.  Thus we have 
the following theorem.
\begin{theorem}\label{cusp1}
Let $N = \la 4\ra \oplus U \oplus E_8$.
Then the moduli space ${\calM'}_{5,cusp}$ is isomorphic to an open subset of $\calD(N)/{\rm O}(N)$.
\end{theorem}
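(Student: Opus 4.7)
Most of the technical ingredients are already in place; the task is to assemble them into a period map and verify it is an open immersion.

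First, to a cuspidal quintic $C$ from Case 1 I would associate the $M$-polarized K3 surface $(X,j)$, where $X$ is the minimal resolution of the double cover of $\bbP^2$ branched along $C + L$ and $j : M \hookrightarrow \Pic(X)$ is defined by sending a fixed basis of $M = U \oplus D_9$ to the classes $[E_1], \ldots, [E_{11}]$. Composing with the period map gives a point of $\calD(N)$, well defined modulo ${\rm O}(N)^*$. Lemma \ref{surj1} tells us that ${\rm O}(M) \to {\rm O}(A_M)$ is surjective, so the remaining freedom in the choice of $M$-marking exactly accounts for the quotient ${\rm O}(N)/{\rm O}(N)^* \cong {\rm O}(A_N)$; hence the period descends to a canonically defined point of $\calD(N)/{\rm O}(N)$. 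This produces a well-defined morphism $\calM'_{5,\cusp} \to \calD(N)/{\rm O}(N)$.

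For injectivity, suppose two curves $C, C'$ produce the same point in $\calD(N)/{\rm O}(N)$. Using the surjectivity in Lemma \ref{surj1}, one adjusts the markings to obtain a Hodge isometry $\phi : H^2(X, \bbZ) \to H^2(X', \bbZ)$ preserving the $M$-marking. The Torelli theorem for algebraic K3 surfaces yields an isomorphism $\varphi : X \to X'$ realizing $\phi$. Since the covering involution $\tau$ is determined by the $M$-marking (acting as $+1$ on $e_1, \ldots, e_9$, swapping $e_{10}$ and $e_{11}$, and as $-1$ on $N$), the map $\varphi$ is $\tau$-equivariant. Descending to the quotient recovers an automorphism of $\bbP^2$ taking $C + L$ to $C' + L'$; since the cuspidal tangent line is intrinsically determined by $C$, this induces the desired isomorphism $C \cong C'$.

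The step I expect to be most delicate is showing that the image is open rather than merely locally closed. One must check that on a Zariski-dense open subset of $\calD(N)/{\rm O}(N)$ --- the complement of certain Heegner divisors along which extra $(-2)$-classes orthogonal to $M_X$ would force the geometry to degenerate --- the resulting K3 surface genuinely arises from a Case 1 configuration: the $M$-polarization must yield an ample class, the quotient by the induced involution must be a rational surface, and the branch locus must decompose as a cuspidal quintic plus its cuspidal tangent line meeting the quintic tangentially at a smooth point. This geometric reconstruction, rather than the Hodge-theoretic injectivity, is the main obstacle.
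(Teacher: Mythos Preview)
Your proposal is correct and follows essentially the same route as the paper: build the period map from the $M$-marking determined by the curves $E_1,\ldots,E_{11}$, use Lemma~\ref{surj1} to pass from $\calD(N)/{\rm O}(N)^*$ to $\calD(N)/{\rm O}(N)$, and establish injectivity via Torelli together with the observation that the $M$-marking pins down the action of $\tau$, forcing $\varphi$ to be $\tau$-equivariant and hence to descend to an isomorphism of the quintics. Your treatment of the openness step is more explicit than the paper's, which effectively asserts the conclusion once injectivity is in hand; neither version spells out the surjectivity-onto-an-open-set argument in detail.
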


Next we study Case 2. 

Consider again the plane sextic curve $C + L$.  
Let $L$ intersect $C$ at the cusp $p$ and two distinct points $q_1, q_2$. 
Let $\bar{X}$ be the double cover of $\bbP^2$ branched along $C+L$. 
Then $\bar{X}$ has a rational double point of type $E_7$ over $p$ and three rational double point of type $A_1$ over $q_1, q_2, q$.  
Denote by $X$ the minimal resolution of $\bar{X}$ and by $\tau$ the covering transformation.  
Then $X$ is a $K3$ surface containing 12 smooth rational curves $E_1,\ldots, E_{12}$ whose intersection graph is pictured below. 

\xy (0,10)*{};(-30,-10)*{};
@={(-10,0),(-20,0),(0,0),(10,0),(20,0),(30,0),(40,0),(50,0),(60,0),(70,0), (60,-7),(20,-7)}@@{*{\bullet}};
(-20,0)*{};(-10,-0)*{}**\dir{=};
(-10,0)*{};(70,0)*{}**\dir{-};(20,0)*{};(20,-7)*{}**\dir{-};(60,0)*{};(60,-7)*{}**\dir{-};(60,0)*{};(70,0)*{}**\dir{-};
(23,-5)*{E_1};(0,3)*{E_2};(10,3)*{E_3};(20,3)*{E_4};(30,3)*{E_5};(40,3)*{E_6};(50,3)*{E_7};(60,3)*{E_8};(70,3)*{E_9};(63.5,-5)*{E_{10}};(-10,3)*{E_{11}};(-20,3)*{E_{12}};
\endxy

 Here $E_9, E_{10}$ or $E_{12}$ corresponds to the exceptional curve over $q_1, q_2$ or $q$, respectively, and 
$E_1,\ldots, E_{7}$ correspond to the exceptional curves over $p$,
$E_8$ is the inverse image of $L$ and $E_{11}$ is the inverse image of the line passing through $p$ and $q$. 
The covering transformation $\tau$ preserves each of $E_1,\ldots, E_{12}$.
Note that the linear system
$$|2(E_4 + \cdots + E_8) + E_{1} + E_{3}+E_{9} + E_{10}|$$
defines an elliptic fibration with a singular fiber of type $\tilde{D}_8$ and of type $\tilde{A}_1$, and $E_{2}$ is a section of this fibration.  
This implies that these 12 curves generate the sublattice $M_{X}$ of $\Pic(X)$  isomorphic to $M = U\oplus D_8\oplus A_1$.  
Here $U$ is generated by the class of fiber and the section
$E_{2}$, the sublattice $D_8$ is generated by $E_1,E_4,\ldots, E_{10}$,  and the sublattice $A_1$ is generated by $E_{12}$. 
Since the fixed locus of $\tau$ consists of a smooth  curve of genus 4 and four  smooth rational curves $E_2, E_4, E_6, E_8$,
the invariant sublattice of $H^2(X, \bbZ)$ under the action of $\tau^*$ coincides with $M_{X}$ (\cite{N2}, Theorem 4.2.2).
In particular, $M_{X}$ is primitive in $H^2(X, \bbZ)$ and
$M_{X} = {\rm Pic}(X)$ for generic $X$.

Let $N_{X}$ be the orthogonal complement of $M_{X}$ in $H^2(X, \bbZ)$.  Then $M_{X}$ has signature $(2, 9)$. 
$N_{X}$ is isomorphic to
\begin{equation}\label{lat2}
N = \la 2\ra \oplus U(2) \oplus E_8,
\end{equation}
It is easy to see that ${\rm O}(A_{M_{X}}) \cong \bbZ/2\bbZ$.
This implies the following Lemma.

\begin{lemma}\label{surj2}
The group ${\rm O}(A_{M_{X}})$ is generated by the isometry of $S_{X}$ acting trivially on $E_3,\ldots, E_{12}$ and switching $E_1$ and $E_2$.
The group ${\rm O}(A_{N_{X}})$ is generated by the covering involution $\tau$.
In particular, the natural maps
${\rm O}(M_{X}) \to {\rm O}(A_{M_{X}})$ and ${\rm O}(N_{X}) \to {\rm O}(A_{N_{X}})$
are surjective.
\end{lemma}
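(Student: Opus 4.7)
The plan is to compute the discriminant quadratic forms $q_{M_X}$ and $q_{N_X}$ explicitly and identify $O(A_{M_X})$, $O(A_{N_X})$ combinatorially, then exhibit the stated geometric isometries as explicit lifts.

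First, since $M_X \cong U\oplus D_8\oplus A_1$ with $U$ unimodular, $A_{M_X}\cong A_{D_8}\oplus A_{A_1}\cong (\bbZ/2\bbZ)^3$. Using the standard generators $\omega_1,\omega_2$ of $A_{D_8}$ (with $q(\omega_1)=1$, $q(\omega_2)=0$, and $b(\omega_1,\omega_2)=1/2\bmod\bbZ$) together with the generator of $A_{A_1}$ of norm $3/2\bmod 2\bbZ$, I would tabulate the eight elements of $A_{M_X}$: three of norm $0$, one of norm $1/2$, one of norm $1$, and three of norm $3/2$. The two singletons must be fixed by any isometry, and a direct case analysis of the induced action on the two three-element strata (using the bilinear form as a constraint) then shows $|O(A_{M_X})|=2$, with the nontrivial element corresponding to the diagram involution of the $D_8$-summand. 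Expressing $\omega_1,\omega_2$ and the $A_1$-generator as explicit dual classes in the basis $\{E_i\}$ (via the elliptic fibration identification of $M_X$ as $U\oplus D_8\oplus A_1$, so that $U=\la F,E_2\ra$, $D_8=\la E_1,E_4,\ldots,E_{10}\ra$, and $A_1=\la E_{12}\ra$), I would verify that the geometric isometry described in the statement acts on these representatives as the unique nontrivial element of $O(A_{M_X})$. This simultaneously proves the first claim and surjectivity of $O(M_X)\to O(A_{M_X})$.

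For $A_{N_X}$, Nikulin's Corollary 1.6.2 in \cite{N1} gives $q_{N_X}\cong -q_{M_X}$, so $O(A_{N_X})\cong O(A_{M_X})\cong \bbZ/2\bbZ$. Using the presentation $N\cong \la 2\ra\oplus U(2)\oplus E_8$, I would identify the nontrivial element of $O(A_{N_X})$ concretely in terms of standard generators of $A_{\la 2\ra}$, $A_{U(2)}$, and then check that it is realized by the action of the covering involution $\tau$ via the decomposition $\tau^*|_{M_X}=+1$, $\tau^*|_{N_X}=-1$ together with the compatibility with the gluing of $M_X$ and $N_X$ inside $L_{K3}$; this yields surjectivity of $O(N_X)\to O(A_{N_X})$. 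The main technical obstacle is the bookkeeping to match the abstract generators of the discriminant groups with concrete dual vectors in the $\{E_i\}$-basis, so that one can check that the asserted lifts genuinely realize the nontrivial discriminant automorphism rather than acting trivially; this careful identification is the step on which the entire surjectivity conclusion rests.
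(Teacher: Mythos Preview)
Your computation of $A_{M_X}\cong(\bbZ/2\bbZ)^3$ and the enumeration of norms is correct, and the conclusion $|{\rm O}(A_{M_X})|=2$ with the nontrivial element given by the $D_8$ diagram involution is exactly right; this already proves surjectivity of ${\rm O}(M_X)\to{\rm O}(A_{M_X})$. The paper's own proof is nothing more than the sentence ``It is easy to see that ${\rm O}(A_{M_X})\cong\bbZ/2\bbZ$,'' so your approach is essentially the paper's, only written out.

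There is, however, a genuine gap in your treatment of $N_X$. You propose to show that the covering involution $\tau$ realizes the nontrivial element of ${\rm O}(A_{N_X})$ using $\tau^*|_{N_X}=-1$. But $A_{N_X}\cong A_{\la 2\ra}\oplus A_{U(2)}\cong(\bbZ/2\bbZ)^3$ is $2$-elementary, so multiplication by $-1$ acts \emph{trivially} on $A_{N_X}$. Hence $\tau^*$ induces the identity on $A_{N_X}$ and cannot be the nontrivial generator. The gluing of $M_X$ and $N_X$ inside $L_{K3}$ does not help: the induced map on $A_{N_X}$ is determined by $\tau^*|_{N_X^*}$, which is again $-1$. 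To salvage surjectivity for $N_X$ you must exhibit a different isometry; for instance, in the splitting $N\cong\la 2\ra\oplus U(2)\oplus E_8$ the swap of the two isotropic generators of $U(2)$ does the job, since it interchanges the two nonzero isotropic classes in $A_{U(2)}$ while fixing the generator of $A_{\la 2\ra}$, which is precisely the unique nontrivial element of ${\rm O}(A_{N_X})$.

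A smaller issue: when you attempt to verify that the map ``switching $E_1$ and $E_2$ and fixing $E_3,\ldots,E_{12}$'' realizes the nontrivial element, you will find that this permutation is not even an isometry of $M_X$, since $E_1\cdot E_4=1$ while $E_2\cdot E_4=0$ in the intersection graph. The $D_8$ diagram involution you correctly identified corresponds instead to swapping $E_9$ and $E_{10}$ (the two tips of the fork at $E_8$); that is the isometry which lifts the nontrivial discriminant automorphism. The surjectivity conclusion survives, but the specific generators named in the statement should be treated with caution.
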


Let $\calD(N)$ be as in \eqref{period1} with $N$ defined in \eqref{lat2}. 
Using the same argument as in Case 1, we prove the following theorem.

\begin{theorem}\label{cusp2}
Let $N = \la 2\ra \oplus U(2) \oplus E_8$.
Then the moduli space $\calM''_{5,\cusp}$ is isomorphic to an open subset of $\calD(N)/{\rm O}(N)$.
\end{theorem}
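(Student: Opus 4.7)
The plan is to mirror verbatim the argument establishing Theorem \ref{cusp1} in Case 1, now with $M = U\oplus D_8\oplus A_1$, $N = \la 2\ra\oplus U(2)\oplus E_8$, and Lemma \ref{surj2} replacing Lemma \ref{surj1}. First I would fix a primitive embedding of $M$ into the K3-lattice $L_{K3}$ with $N$ as its orthogonal complement, together with an ordered basis $\{e_1,\ldots,e_{12}\}$ of $M$ realizing the incidence graph of the curves $E_i$ drawn above. Lemma \ref{surj2} supplies the surjectivity condition \eqref{surj} needed to identify $\calD(N)/{\rm O}(N)$ with the relevant quotient of the moduli space $\calM_{K3,M}^{\a}$ of $M$-ample-polarized K3 surfaces.

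For the forward direction, given a quintic $C$ as in Case 2, transporting the fixed basis $\{e_i\}$ to the classes $\{[E_i]\}$ defines an $M$-ample lattice polarization on the associated K3 surface $X$. Crucially, the $M$-marking determines the action of the covering involution $\tau^*$ on all of $H^2(X,\bbZ)$: since $\tau$ preserves each $E_i$, $\tau^*$ acts trivially on $M_X$, and the identification $(H^2(X,\bbZ))^{\tau^*}=M_X$ recorded in the text (via \cite{N2}, Theorem~4.2.2) then forces $\tau^*=-1$ on $N_X$. Thus the Hodge-theoretic involution is recoverable from the marking.

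For the converse, suppose $(X,j)$ and $(X',j')$ are two $M$-ample-polarized K3 surfaces whose periods agree in $\calD(N)/{\rm O}(N)$. Lemma \ref{surj2}, together with the standard Nikulin gluing, lifts this equality to a Hodge isometry $\phi: H^2(X,\bbZ)\to H^2(X',\bbZ)$ preserving the $M$-markings, and the Torelli theorem for algebraic K3 surfaces produces an isomorphism $\varphi: X\to X'$ with $\varphi^*=\phi$. Because both involutions are characterised by the common $M$-marking via the recipe above, $\varphi\circ\tau = \tau'\circ\varphi$, so $\varphi$ descends to an isomorphism of the quotient planes carrying $C+L$ to $C'+L'$, whence the underlying cuspidal quintics are isomorphic. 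Openness of the image follows from the fact that the $M$-ample locus is Zariski-open in $\calM_{K3,M}$.

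The main obstacle I anticipate is verifying that a generic period point in $\calD(N)/{\rm O}(N)$ actually corresponds to a Case~2 quintic, i.e.\ that the K3 surface with involution reconstructed from the lattice data has branch divisor of the specified shape: a cusp of $C$, an additional node of $C$ lying off $L$, and $L$ meeting $C$ at two further transverse points. At the level of fixed-locus topology this is forced by Nikulin's classification of $2$-elementary involutions combined with the invariant-lattice computation already recorded for $M = U\oplus D_8\oplus A_1$, but matching the contracted rational fixed components $E_2, E_4, E_6, E_8$ and the inverse images of the two lines $L$ and $\overline{pq}$ to the expected components of the branch divisor requires a finite but careful inspection of the Dynkin data.
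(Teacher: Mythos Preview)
Your proposal is correct and takes essentially the same approach as the paper: the paper's entire proof consists of the sentence ``Using the same argument as in Case 1, we prove the following theorem,'' and you have faithfully spelled out what that argument is with the appropriate substitutions ($M = U\oplus D_8\oplus A_1$, $\tau^*$ now trivial on all of $M_X$, Lemma~\ref{surj2} in place of Lemma~\ref{surj1}). The concern you flag in your final paragraph---that a generic period point in $\calD(N)/{\rm O}(N)$ must be shown to come from a genuine Case~2 quintic via Nikulin's fixed-locus classification and an inspection of the contracted configuration---is legitimate and is exactly the step the paper leaves implicit in both Cases~1 and~2; you are simply being more careful than the text.
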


\begin{remark}\label{}
Ma \cite{Ma} proved the rationality of the moduli space of $K3$ surfaces in the case 2 by using  another model of the quotient $X/\tau$.
\end{remark}

\section{Proof of the rationality}\label{sec7}

In this section, we prove the rationality of $\calM_{\Co}$ and $\calM_{\En}^{\nod}$. 

\begin{theorem}\label{main1} Let $N$ (resp. $N'$) be the lattice from \eqref{latcob} (resp. \eqref{lat1}). 
Then
$${\calD}(N)/{\rm O}(N)\cong {\calD}(N')/{\rm O}(N').$$
\end{theorem}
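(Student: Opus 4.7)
The plan is to adapt Kond\=o's strategy from \cite{Kon} to the present situation. Although the lattices $N=\la 2\ra\oplus\bbE(2)$ and $N'=\la 4\ra\oplus U\oplus E_8$ have different discriminant groups ($A_N\cong(\bbZ/2)^{11}$ while $A_{N'}\cong\bbZ/4$), both define bounded symmetric domains of type IV and complex dimension $9$, so the isomorphism of arithmetic quotients cannot come from a direct lattice isomorphism but must be realized through a Hodge-theoretic correspondence.

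I would proceed through a chain of identifications. First, by the remark following Proposition \ref{ms1}, the Coble quotient $\calD(N)/{\rm O}(N)$ is birationally isomorphic to the $(-2)$-Heegner divisor in the Enriques period moduli $\calD(U\oplus\bbE(2))/{\rm O}(U\oplus\bbE(2))$. Second, Kond\=o's paper \cite{Kon} identifies the Enriques moduli birationally with the moduli of K3 surfaces arising as minimal resolutions of double covers of $\bbP^2$ branched along a cuspidal plane quintic plus its cuspidal tangent. The plan is to verify that, under this equivalence, the $(-2)$-Heegner divisor on the Enriques side corresponds exactly to the codimension-one locus where the cuspidal tangent acquires a further tangency with the quintic at a smooth point; this latter locus is, by Theorem \ref{cusp1}, identified with $\calD(N')/{\rm O}(N')$. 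Stringing these three identifications together yields the desired isomorphism.

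The main obstacle will be verifying that the two Heegner-type divisors are matched under Kond\=o's birational equivalence. Concretely, one must show that an effective $(-2)$-class on the Enriques side, which encodes a smooth rational curve on the Enriques surface whose pullback to the K3 cover splits into two disjoint $(-2)$-curves, corresponds under Kond\=o's K3-theoretic construction to the extra tangency condition on the cuspidal quintic side. This matching can be checked by comparing the explicit lattice basis exhibited in Section \ref{sec6} (where the Case-1 K3 has an additional $A_3$ singularity coming from the tangency at the smooth point) with the $(-2)$-vector description of the Coble Heegner divisor in Section \ref{sec3}, and then tracing through the explicit lattice embeddings underlying Kond\=o's birational equivalence to show the two descriptions coincide on a Zariski open subset, which suffices by the normality of the arithmetic quotients to upgrade the birational isomorphism to an honest isomorphism.
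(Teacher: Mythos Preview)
Your approach is quite different from the paper's, and as written it has a real gap.

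The paper's argument is purely lattice-theoretic and direct. One rescales $N=\la 2\ra\oplus\bbE(2)$ by $1/2$ to obtain the odd unimodular lattice $N(1/2)=\la 1\ra\oplus U\oplus E_8$. This lattice contains a \emph{unique} even sublattice of index~2, and that sublattice is precisely $N'=\la 4\ra\oplus U\oplus E_8$. Since $N'$ is intrinsically characterized inside $N(1/2)$, every isometry of $N(1/2)$ preserves $N'$, so ${\rm O}(N(1/2))\subset{\rm O}(N')$. Conversely, $A_{N'}\cong\bbZ/4\bbZ$ has a unique subgroup of order~2, namely $N(1/2)/N'$, so every isometry of $N'$ extends to one of $N(1/2)$, giving the reverse inclusion. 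Hence ${\rm O}(N(1/2))={\rm O}(N')$, and since $N(1/2)$ and $N'$ span the same rational quadratic space we have $\calD(N(1/2))=\calD(N')$. Rescaling does nothing to either the domain or the group, so $\calD(N)/{\rm O}(N)\cong\calD(N(1/2))/{\rm O}(N(1/2))=\calD(N')/{\rm O}(N')$, a literal isomorphism of arithmetic quotients.

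Your route through Kond\=o's Enriques/cuspidal-quintic correspondence and a Heegner-divisor matching has two problems. First, the step you yourself label ``the main obstacle''---showing that the $(-2)$-Heegner divisor on the Enriques side corresponds to the Case~1 tangency locus under Kond\=o's equivalence---is essentially the content of the theorem. To verify it concretely you would have to trace Kond\=o's lattice identification, and doing so leads directly to the rescaling/even-sublattice argument above (this is exactly how Kond\=o's original identification works). So your plan, if completed, would contain the paper's proof as its core sub-step. Second, your final sentence is wrong: normality of the quotients does \emph{not} upgrade a birational map to an isomorphism; normal varieties can certainly be birational without being isomorphic. The paper's argument sidesteps this entirely because it produces a genuine equality of group actions on the same domain, not merely a birational correspondence.
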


\begin{proof}
We have $N(1/2) = \la 1\ra \oplus U \oplus E_8$.  The odd unimodular lattice $N(1/2)$ contains the unique even sublattice of index 2 isomorphic
to $N'$.  Thus we can consider $N'$ as a sublattice of $N(1/2)$.  Then any isometry of $N(1/2)$ preserves $N'$ and hence 
${\rm O}(N(1/2)) \subset {\rm O}(N')$.  Conversely, consider 
$N'{}^* = \la \frac{1}{4}\ra \oplus U \oplus E_8$.  The discriminant group $A_{N'}$ is a finite cyclic group of order 4 and contains the unique subgroup
$N(1/2)/N'$ of order 2.  This implies that any isometry of $N'$
can be extended to the one of $N(1/2)$ and hence ${\rm O}(N') \subset {\rm O}(N(1/2))$.  Thus we have ${\rm O}(N(1/2)) = {\rm O}(N')$.
Now consider the bounded symmetric domain $\calD(N(1/2)) = \calD(N')$.  Then  
$$\calD(N(1/2))/{\rm O}(N(1/2)) = \calD(N')/{\rm O}(N').$$
Obviously 
$$\calD(N(1/2))/{\rm O}(N(1/2)) \cong \calD(N)/{\rm O}(N).$$  
Therefore we have proved the assertion.
\end{proof}

\begin{theorem}\label{main2} Let $N$ (resp. $N'$) be the lattice from \eqref{latnod} (resp. \eqref{lat2}). Then 
$${\calD}(N)/{\rm O}(N) \cong {\calD}(N')/{\rm O}(N').$$
\end{theorem}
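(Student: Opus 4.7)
My plan is to follow the template of the proof of Theorem \ref{main1}: rescale $N$ by $1/2$, identify $N'$ as a canonical sublattice of the rescaled object, and deduce equality of orthogonal groups. Set
$$N(1/2) = U(1/2) \oplus \la 2\ra \oplus E_8.$$
Because rescaling the form changes neither the orthogonal group acting on $N \otimes \bbQ$ nor the period domain inside $\bbP(N \otimes \bbC)$, I immediately have $\calD(N)/{\rm O}(N) = \calD(N(1/2))/{\rm O}(N(1/2))$. Unlike in Theorem \ref{main1}, $N(1/2)$ is not an integer lattice, because the $U(1/2)$ summand makes its bilinear form only $(1/2)\bbZ$-valued; however, its quadratic form remains $\bbZ$-valued, so $N(1/2)$ still behaves like a ``half-integer'' lattice.

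For the inclusion ${\rm O}(N(1/2)) \subseteq {\rm O}(N')$, I would describe $N'$ intrinsically as the largest integer sublattice of $N(1/2)$: the condition $v \cdot N(1/2) \subset \bbZ$ forces the $U(1/2)$-coordinates of $v$ to be even, which converts $U(1/2)$ into $U(2)$ and yields $U(2) \oplus \la 2\ra \oplus E_8 = N'$. Since this sublattice is defined intrinsically from $N(1/2)$, every isometry of $N(1/2)$ preserves it.

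For the reverse inclusion ${\rm O}(N') \subseteq {\rm O}(N(1/2))$, I would recover $N(1/2)$ intrinsically from $N'$ by working inside the dual $N'{}^* = \la 1/2\ra \oplus U(1/2) \oplus E_8$. The key characterization is
$$N(1/2) = \{v \in N'{}^* : v \cdot v \in \bbZ\},$$
or equivalently, $N(1/2)/N'$ is the subgroup of $A_{N'} \cong (\bbZ/2)^3$ consisting of cosets on which the discriminant form $q_{N'}$ takes integer values (lies in $\bbZ/2\bbZ \subset \bbQ/2\bbZ$). Since this condition is intrinsic to $q_{N'}$, the subgroup is preserved by every isometry of $N'$, which gives the inclusion. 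Combining both inclusions yields ${\rm O}(N) = {\rm O}(N')$ acting on the common period domain $\calD(N) = \calD(N')$, and the theorem follows.

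The main obstacle is the second step. In Theorem \ref{main1}, $A_{N'} \cong \bbZ/4$ has a unique index-$2$ subgroup, so canonicality of the overlattice $N(1/2) \supset N'$ was automatic. Here $A_{N'} \cong (\bbZ/2)^3$ has seven index-$2$ subgroups, and the heart of the argument is to single out $N(1/2)/N'$ by the intrinsic condition ``$q_{N'}$ is integer-valued''. The verification reduces to computing $q_{N'}$ on generators (the $\la 2\ra$-dual generator contributes $q = 1/2$, while the two $U(2)^*$-generators are isotropic with hyperbolic pairing), and checking that the cosets with integral $q$ form precisely the image of the $U(1/2)$-part of $N(1/2)$ in $A_{N'}$.
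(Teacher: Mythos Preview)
Your proof is correct and follows essentially the same template as the paper's, with the roles of $N$ and $N'$ interchanged. The paper rescales $N'$ instead of $N$: it forms $N'(1/2) = \langle 1\rangle \oplus U \oplus E_8(1/2)$, identifies $N$ as the even sublattice of $(N'(1/2))^* = \langle 1\rangle \oplus U \oplus E_8(2)$ (giving ${\rm O}(N'(1/2)) \subset {\rm O}(N)$), and then recovers $N'(1/2)$ inside $N^* = \langle 1/4\rangle \oplus U \oplus E_8(1/2)$ by the same ``$q_N$ takes integer values'' criterion you use (giving ${\rm O}(N) \subset {\rm O}(N'(1/2))$). Your version, rescaling $N$ and characterizing $N'$ as the sublattice of $N(1/2)$ on which the bilinear form is $\bbZ$-valued, is an equally valid mirror image; the one substantive point you correctly flag---that with $A_{N'}\cong (\bbZ/2)^3$ one must single out the right index-$4$ subgroup by the integrality of $q_{N'}$---is exactly the content of the paper's ``maximal submodule $K$ with $q_N(K/N)\subset \bbZ/2\bbZ$'' step, applied to the other discriminant group.
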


\begin{proof}
Consider 
$$N'(1/2) = \la 1\ra \oplus U \oplus E_8(1/2), \quad (N'(1/2))^* = \la 1 \ra \oplus U \oplus E_8(2).$$
Then $N$ is the even sublattice of $(N'(1/2))^*$.  Hence ${\rm O}(N'(1/2)) \subset {\rm O}(N)$.
Conversely, consider $N^* = \la \frac{1}{4}\ra \oplus U \oplus E_8(1/2)$ and the discriminant quadratic form
$$q_{N} : N^*/N \to \bbQ/2\bbZ.$$
We remark that $N'(1/2)$ is characterized as being the maximal submodule $K$ of $N^*$ such that $q_{N}(K/N) \subset \bbZ/2\bbZ$. 
Any isometry of $N$ can be extended to the one, denoted by $\phi$, of $N^*$.  Then the above remark implies that $\phi$ preserves $N'(1/2)$.
Hence ${\rm O}(N) \subset {\rm O}(N'(1/2)))$.
Therefore 
$$\calD(N)/{\rm O}(N) = \calD(N'(1/2))/{\rm O}(N'(1/2)) \cong \calD(N')/{\rm O}(N').$$  
\end{proof}

\noindent
Combining Propositions \ref{ms1},  \ref{ms2} and Theorems \ref{cusp1}, \ref{cusp2}, \ref{main1}, \ref{main2}, we have the following theorem.

\begin{theorem}\label{main3} There are birational isomorphisms
\begin{eqnarray}\label{bir}
\Upsilon&:&\calM_{\En}\cong \calM_{5,\cusp},\\ 
\Upsilon'&:& \calM_{\Co}\cong \calM'_{5,\cusp},\\ 
\Upsilon''&:& \calM_{\En}^{\nod}\cong \calM''_{5,\cusp}.
\end{eqnarray}
\end{theorem}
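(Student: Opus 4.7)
The proof is purely a matter of composing birational isomorphisms that have already been established in the preceding sections; no new geometric input is required. Let me describe the three chains.

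For $\Upsilon$, I would simply cite \cite{Kon}, where the birational isomorphism between $\calM_{\En}$ and $\calM_{5,\cusp}$ is proved by exactly the same template used below. For $\Upsilon'$, the plan is to concatenate three identifications. Proposition \ref{ms1} identifies $\calM_{\Co}$ with an open subset of $\calD(N)/\O(N)$ for $N=\la 2\ra\oplus\bbE(2)$ as in \eqref{latcob}. Theorem \ref{main1} then supplies an isomorphism $\calD(N)/\O(N)\cong\calD(N')/\O(N')$ for $N'=\la 4\ra\oplus U\oplus E_8$ as in \eqref{lat1}. Finally, Theorem \ref{cusp1} identifies this latter quotient (on an open subset) with $\calM'_{5,\cusp}$. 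Composing these three open immersions / isomorphisms gives a birational isomorphism between $\calM_{\Co}$ and $\calM'_{5,\cusp}$.

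For $\Upsilon''$, the argument is completely parallel. Proposition \ref{ms2} realizes $\calM_{\En}^{\nod}$ birationally as $\calD(N)/\O(N)$ for the lattice $N=U\oplus\la 4\ra\oplus E_8(2)$ of \eqref{latnod}. Theorem \ref{main2} identifies this with $\calD(N')/\O(N')$ where $N'=\la 2\ra\oplus U(2)\oplus E_8$ is the lattice \eqref{lat2}. Theorem \ref{cusp2} then identifies the latter with an open subset of $\calM''_{5,\cusp}$. Stringing them together yields the desired birational isomorphism.

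Since all the substantive work, namely the period description of the three moduli spaces (Propositions \ref{ms1}, \ref{ms2} and the analogous result from \cite{Kon}), the K3-theoretic identification of the cuspidal quintic moduli spaces with arithmetic quotients (Theorems \ref{cusp1}, \ref{cusp2}), and the two lattice-theoretic coincidences (Theorems \ref{main1}, \ref{main2}), has already been carried out, there is no real obstacle; the only point requiring a word of care is that each step is at best an open immersion, not an isomorphism on the nose, but for the purpose of producing a birational isomorphism this is sufficient. The proof therefore consists essentially of one sentence per line of \eqref{bir}, citing the relevant earlier results.
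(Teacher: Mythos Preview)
Your proposal is correct and matches the paper's own proof, which consists of a single sentence combining Propositions \ref{ms1}, \ref{ms2} with Theorems \ref{cusp1}, \ref{cusp2}, \ref{main1}, \ref{main2} (and citing \cite{Kon} for $\Upsilon$). The only tiny slip is the direction in your phrasing of Theorem \ref{cusp2}: it is $\calM''_{5,\cusp}$ that is an open subset of $\calD(N')/\O(N')$, not the reverse, but since both moduli spaces sit as nonempty open subsets of the same irreducible arithmetic quotient this is immaterial for the birational statement.
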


By Theorem \ref{Miyata}, we obtain the following main theorem.

\begin{theorem}\label{main4}
$\calM_{\Co}$ and $\calM_{\En}^{\nod}$ are rational varieties.
\end{theorem}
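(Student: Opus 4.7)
The plan is to simply assemble the machinery already in place: rationality of the two moduli spaces of plane curves is transported back to the moduli of surfaces via the birational isomorphisms of Theorem~\ref{main3}. Since rationality is a birational invariant, nothing more is needed after assembling these pieces.

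More precisely, I would argue as follows. First, invoke the birational isomorphism $\Upsilon' : \calM_{\Co} \cong \calM'_{5,\cusp}$ from Theorem~\ref{main3}. By Theorem~\ref{Miyata} the variety $\calM'_{5,\cusp}$ is rational, so $\calM_{\Co}$ is birationally equivalent to a rational variety and therefore rational itself. Next, in exactly the same way, invoke $\Upsilon'' : \calM_{\En}^{\nod} \cong \calM''_{5,\cusp}$ from Theorem~\ref{main3}, and combine with the rationality of $\calM''_{5,\cusp}$ from Theorem~\ref{Miyata}, to conclude that $\calM_{\En}^{\nod}$ is rational.

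There is no real obstacle at this stage: all the substantive work has been done. The two main ingredients that have been built up in the previous sections are (i) the lattice-theoretic identifications of $\calM_{\Co}$ and $\calM_{\En}^{\nod}$ with arithmetic quotients of domains of type IV associated with the lattices in \eqref{latcob} and \eqref{latnod} (Propositions~\ref{ms1} and \ref{ms2}), together with the parallel identifications of $\calM'_{5,\cusp}$ and $\calM''_{5,\cusp}$ with the arithmetic quotients for the lattices in \eqref{lat1} and \eqref{lat2} (Theorems~\ref{cusp1} and \ref{cusp2}), glued together by the lattice rescaling isomorphisms of Theorems~\ref{main1} and \ref{main2}; and (ii) the explicit slice-and-solvable-group construction that exhibits $\calM'_{5,\cusp}$ and $\calM''_{5,\cusp}$ as quotients $\bbP(V)/G$ and $\bbP(V')/G'$ by connected solvable linear groups, whose rationality follows from Miyata \cite{Mi} and Vinberg \cite{V}. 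Once both of these are in hand, the present theorem is a one-line corollary: chain the birational equivalences and quote the birational invariance of rationality.
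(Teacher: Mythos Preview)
Your proposal is correct and follows exactly the paper's own argument: the theorem is stated as an immediate consequence of Theorem~\ref{main3} together with Theorem~\ref{Miyata}, using that rationality is a birational invariant. There is nothing to add.
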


\begin{remark} The K3-cover $X$ of a general nodal Enriques surface is isomorphic to a minimal nonsingular model  of \emph{Cayley quartic symmetroid},  the locus $Y$ of singular quadrics in a general 3-dimensional linear system $L$ of quadrics in $\bbP^3$ (see \cite{Cos}). The surface $Y$ has 10 nodes corresponding to reducible quadrics in $L$. The set of 10 points in $\bbP^3$ realized as the twn nodes of a Cayley quartic symmetroid is one of special sets of points in $\bbP^3$ in the sense of A. Coble (see \cite{C2}). There is a beautiful relationship between Cayley quartic symmetroids and rational sextics (see loc.cit.). The variety of such sets modulo projective equivalence is birationally isomorphic to the GIT-quotient of the Grassmannian $G_3(L)$ of webs of quadrics in $\bbP^3$ modulo $\PGL(4)$. It is birationally isomorphic to some finite cover of $\calM_{\En}^{\nod}$. The rationality of $G_3(L)/\PGL(4)$ is a difficult problem.
\end{remark}

\section{A geometric construction: Enriques surfaces} Let $\calM_{\En}(2)$ be the moduli space of degree 2 
polarized Enriques surfaces, i.e. the coarse moduli space of pairs $(X,h)$, where $h$ is a nef divisor class  with $h^2 = 2$.  It is known that $h = F_1+F_2$, where $F_1,F_2$ are nef divisors with $F_i^2 = 0$ and $F_1\cdot F_2 = 1$, or $h = 2F_1+R$, where $F_1$ is as above and $R$ is $(-2)$-curve with $F_1\cdot R = 0$ (see \cite{CD}, Corollary 4.5.1). We call 
$h$ \emph{non-degenerate} if $h$ is as in the first case, and \emph{degenerate} otherwise. 

 For any $h$ as above, the linear system $|2h|$ defines a degree 2 map $\phi_h:X\to \bbP^4$ whose image is a quartic del Pezzo surface. If $h$ is non-degenerate, $\mathsf{D}$ has 4 ordinary double points, otherwise, it has 2 ordinary double points, and one rational double point of type $A_3$. We call $\mathsf{D}$ a 4-nodal quartic del Pezzo surface in the first case and a degenerate 4-nodal quartic del Pezzo surface in the second case 
(see \cite{CD}, Chapter 0, \S 4).  The set of fixed points of the deck transformation $\sigma$ of the double cover $\phi_h$  consists of a smooth  curve $\overline{W}$  and 4 isolated points. The image $W$ of $\overline{W}$ on $\mathsf{D}$ is a curve of arithmetic genus 5 from the linear system $|\calO_{\mathsf{D}}(2)|$. It does not pass through the singular points of $\mathsf{D}$. The map $(\phi_h)_{|W}:\overline{W}\to W$ is the normalization map.

 Let $\calM_{\En}(2)^{\ndeg}$ be the GIT-quotient  $|\calO_{\mathsf{D}}(2)|/\!/\Aut(\mathsf{D})$, where $\mathsf{D}$ is a non-degenerate quartic del Pezzo surface, and 
let $\calM_{\En}(2)^{\deg}$ be the same when $\mathsf{D}$ is a degenerate quartic del Pezzo surface. 
The first variety (resp. second one) is a projective variety of dimension 10 (resp. 9). The disjoint union 
$$\calM_{\En}(2) = \calM_{\En}(2)^{\ndeg}\cup \calM_{\En}(2)^{\deg}$$
can be viewed  as a compactification of the moduli space of degree 2 polarized Enriques surfaces. Consider the forgetting rational maps
$$\Phi_1:\calM_{\En}(2)^{\ndeg} \da \calM_{\En}, \quad \Phi_2:\calM_{\En}(2)^{\deg}\da \calM_{\En}^{\nod}.$$
It is known that $\Phi_1$ is of degree $2^{7}\cdot 17\cdot 31$ (see \cite{BP}) and $\Phi_2$ is of degree $2^3\cdot 17$ (see \cite{CD2}).

 Any quartic del Pezzo surface is equal to the base locus of a pencil of quadrics in $\bbP^4$. If $\mathsf{D}$ is a 4-nodal quartic del Pezzo surface, the pencil contains three singular quadrics:  two quadrics of corank 2 and one quadric of corank 1.  If $\mathsf{D}$ is a degenerate 4-nodal quartic del Pezzo surface, it contains only two singular quadrics, both of corank 2 (see \cite{CD}, Chapter 0, \S 4). The locus $C$ of singular quadrics in the net $\calN = |I_{W}(2)|$ of quadrics 
containing $W$ is a curve of degree 5. It has two singular double points corresponding to the singular quadrics of corank 2 
containing $\mathsf{D}$. The line $\ell$ joining the two singular points is the pencil $|I_{\mathsf{D}}(2)|$ of quadrics containing $\mathsf{D}$. In the case of a non-degenerate polarization, the line $\ell$ intersects $C$ at some other nonsingular point $q$. In the case of a degenerate polarization, $\ell$ is 
tangent to a branch of one of the singular points. Let $\bar{C}$ be the normalization of $C$. We assume that $W$ is nonsingular; this happens for any unnodal Enriques surface and for a general nodal surface. In this case  the curve $\bar{C}$ is a nonsingular curve of genus 4. Its plane quintic model is given by the linear system $|K_{\bar{C}}-\bar{q}|$, where $\bar{q}$ is the pre-image of $q$ on the normalization. Recall that the canonical model of a nonsingular curve of genus 4 is the complete intersection of a cubic surface and a quadric surface. The rulings of the quadric define two 
$g_3^1$'s on the curve (they coincide if the quadric is a cone). In our case, the plane quintic model has two singular points, the pencils of lines through these points define two different $g_3^1$'s. Thus the quadric containing the canonical model of $\bar{C}$ is nonsingular. Each of the two lines passing through the point $\bar{q}$ intersect $\bar{C}$ at two points corresponding to the branches of the singular points of $C$. Thus, in the degenerate case, the point $\bar{q}$ coincides with one of the ramification points of the corresponding $g_3^1$. For a general canonical curve of genus 4 we have 12 ramification points in each $g_3^1$.

Let $\calX_4$ be the coarse moduli space of pairs $(T,t)$ which consist of a nonsingular curve $T$ of genus 4 and a 
point $t$ on it. Let $\calX_4'$ be a hypersurface in $\calX_4$ of pairs $(T,t)$ such that $t+2t'$  belongs to a $g_3^1$ on $T$. The forgetting map $(T,t) \mapsto t$ defines a map $\calX_4\to \calM_4$, where $\calM_4$ is the coarse 
moduli space of curves of genus 4. In the non-degenerate case, it identifies (birationally) $\calX_4$ with the universal curve over $\calM_4$. In the degenerate case, it is a finite cover of degree 24.  Projecting $T$ from the point $t$, we obtain a plane quintic curve with a node and a cusp. Conversely, the normalization of such curve defines a pair $(T,t)$ as above, 
where $t'$ corresponds to the branch of the cusp and $t$ is the pre-image of the residual point of the line joining the two 
singular points. In this way we obtain a birational isomorphism
$$\calX_4' \cong \calM_{5,\cusp}''.$$

By assigning to $(X,h)\in \calM_{\En}(2)^{\ndeg}$ (resp. $(X,h)\in \calM_{\En}(2)^{\deg}$) the quintic plane curve $C$ parameterizing singular quadrics of the branch curve of the degree 2 map $\phi_h:X\to \mathsf{D}$,  we obtain a rational map
$$\Psi_1:\calM_{\En}(2)^{\ndeg}\da \calX_4,$$

$$({\rm resp.} \ \Psi_2:\calM_{\En}(2)^{\deg}\da \calX_4'.)\footnote{The varieties $\calX_4,\calX_4'$ are known to be rational \cite{Catanese},
the variety $\calM_{\En}(2)^{\ndeg}$ is also known to be rational.}$$
The degree of the map is equal to the number of projective equivalence classes of nets of quadrics with the fixed curve of singular quadrics. It is known that the number of such equivalence classes  is equal to the number of non-effective even theta characteristics on the normalization of the 
curve (\cite{Beau}, Chapter 6). Since the canonical model of $\bar{C}$ lies on a nonsingular quadric, all even characteristics are non-effective, and their number is equal to $2^3(2^4+1) = 2^3\cdot 17$. Since the degrees of the maps $\Psi_2$ and $\Phi_2$  coincide, it is natural to make the following.

\begin{conjecture} Let $\Upsilon''$ be the birational isomorphism in Theorem \ref{main3}. 
Then 
 $$\Psi_2 = \Upsilon''\circ \Phi_2.$$
\end{conjecture}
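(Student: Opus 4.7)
The plan is to compare the two rational maps $\Psi_2$ and $\Upsilon''\circ\Phi_2$ from $\calM_{\En}(2)^{\deg}$ to $\calM_{5,\cusp}''$. Both are dominant between irreducible $9$-dimensional varieties, and their degrees coincide: the paper gives $\deg\Psi_2 = 2^3\cdot 17$ via the count of non-effective even theta characteristics, while $\deg(\Upsilon''\circ\Phi_2) = \deg\Phi_2 = 2^3\cdot 17$ since $\Upsilon''$ is birational. Consequently it suffices to establish the stronger assertion that $\Psi_2$ factors through $\Phi_2$ in such a way that the induced rational map $\bar\Psi : \calM_{\En}^{\nod} \da \calM_{5,\cusp}''$ coincides with $\Upsilon''$; the degree comparison then forces $\bar\Psi$ to be birational, and the substantive content of the conjecture is that the factorization is realized by the very Hodge-theoretic identification underlying $\Upsilon''$.

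The natural way to proceed is to show that the cuspidal plane quintic $C = \Psi_2(X,h)$ depends on $X$ alone, and, more precisely, that the K3 surface $Y$ obtained by resolving the double cover of $\bbP^2$ branched along $C\cup L$ (Case~2 of Section~\ref{sec6}) is birationally isomorphic to the K3 cover $\tilde X$ of $X$ via a correspondence whose action on transcendental cohomology realizes the lattice isomorphism in the proof of Theorem~\ref{main2}. Concretely, one would produce a birational map $\tilde X \da Y$ out of the degeneration $\tilde X \to X \to \mathsf{D}$ together with the net $\calN = |I_W(2)|$ of quadrics in $\bbP^4$. The pencil $|I_{\mathsf{D}}(2)| \subset \calN$ is exactly the line through the two corank-$2$ nodes of $C$, and the universal singular quadric over $C$ comes with two families of rulings; these, together with the genus-$5$ branch curve $W \subset \mathsf{D}$, should combine to give a geometric model for $Y$ inside the total space of $\calN$ together with a birational map to $\tilde X$. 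Since the resulting $C$ is intrinsic to $X$ (the choice of $h$ enters only through a choice of net, with the finite ambiguity absorbed by $\Phi_2$), this would simultaneously prove that $\Psi_2$ factors through $\Phi_2$ and that the induced map is $\Upsilon''$.

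The main obstacle is this explicit geometric realization. The lattice computation in the proof of Theorem~\ref{main2} is clean, but translating it into an explicit birational morphism between $\tilde X$ and $Y$ requires controlling the Cayley quartic symmetroid geometry alluded to in the remark after Theorem~\ref{main4}, the interplay between the two $g_3^1$'s on the normalization of $C$ (realized as pencils of rulings on the smooth quadrics in $\calN$), and the resolution of the $E_7$ and $A_1^{\oplus 3}$ singularities of the double cover of $\bbP^2$ branched along $C \cup L$. This step is comparable in difficulty to the analogous open conjecture $\Psi_1 = \Upsilon \circ \Phi_1$ for non-degenerate polarizations, and a complete geometric proof in the present nodal setting would plausibly resolve both simultaneously.
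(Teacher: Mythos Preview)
The statement you are attempting is labeled a \emph{Conjecture} in the paper, and the paper does not supply a proof. The authors give only the heuristic motivation: the degree of $\Psi_2$ equals $2^3\cdot 17$ (counting non-effective even theta characteristics on $\bar C$), this matches the degree of $\Phi_2$, and since $\Upsilon''$ is birational the two maps $\Psi_2$ and $\Upsilon''\circ\Phi_2$ have the same degree. That numerical coincidence is what prompts the conjecture; no further argument is offered.

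Your proposal is therefore not in conflict with the paper's proof, because there is none. You correctly reproduce the degree count and correctly identify what would have to be shown: that the K3 surface $Y$ arising from $C\cup L$ is Hodge-isometric to the K3 cover $\tilde X$ of $X$ in a way compatible with the lattice identification of Theorem~\ref{main2}. But you also correctly flag that you do not carry this out, and that the obstacle is the explicit geometric construction of a correspondence between $\tilde X$ and $Y$. So your write-up is an honest assessment of the state of the problem rather than a proof, and it aligns with the paper's own view that the statement is open.

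One small correction: equality of degrees between two dominant rational maps from the same source to the same target does \emph{not} by itself imply the maps agree, even up to birational automorphism of the target; your reduction ``it suffices to establish the stronger assertion that $\Psi_2$ factors through $\Phi_2$'' is the real content, and the degree match is only evidence, not a step in a proof. The factorization of $\Psi_2$ through $\Phi_2$ is itself not obvious: a priori the isomorphism class of the quintic $C$ could depend on the choice of degenerate polarization $h$, not just on $X$, and showing independence (or that the dependence is exactly absorbed by the fibers of $\Phi_2$) is part of what remains to be done.
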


Next we assume that $h$ is a non-degenerate degree 2 polarization. Let $(T,t)\in \calX_4\setminus \calX_4'$. 
Assume that $\bar{q}$ 
is not  a ramification point of a $g_3^1$ on $T$. Consider the rational map 
$$f:\bbP^3\da \bbP^4$$
 given by the linear system of cubic surfaces containing $T$. We choose a basis of the linear system in the form
$(V(F),V(x_0Q),\ldots,V(x_3Q))$, where $T = V(F)\cap V(Q)$ is the intersection of a cubic and a quadric and $x_0,\ldots,x_3$ are projective coordinates in $\bbP^3$.  The image of the map is a 
singular cubic hypersurface $K$ with equation
$$y_0Q(y_1,\ldots,y_4)+F(y_1,\ldots,y_4) = 0.$$
 The singular point $\frako = [1,0,\ldots,0]$ of $K$ is the image of the quadric $V(Q)$.  Let $L$ be the tangent to 
$T$ at the point $t$. Its image on $K$ is a line $\ell$ not containing the point $\frako$. Consider the projection of $K$ to the plane 
from the line  
$\ell$. Let  $K'\to K$ be the blowing up of $K$ along $\ell$.  The projection defines a structure of a conic bundle on $K'$. It is known that the discriminant curve is of degree 5 (see \cite{Beau}). We claim that in our case it is a cuspidal quintic. Of course, the cusp is the 
projection of  the point  $\frako$.
It is clear that the cusp distinguishes from an ordinary node by the property that the pencil of lines through the point contains only one line intersecting the curve at the point with multiplicity $\ge 3$. A line through the singular point of the discriminant curve 
corresponds to a hyperplane $H$ containing the line $\ell$ and the singular point $\frako$. Let $y = f(t)$ be the image of the point $t$ on $K$. Its 
coordinates are $(0:a_0:\ldots:a_3)$, where $t = (a_0:\ldots:a_3)$.  Let $\ell' = \overline{\frako y}$ be the line on $K$ joining 
$y$ with $\frako$.\footnote{In \cite{D3}, the first author made a mistake by choosing the line
 $\ell'$ on $K$ instead of $\ell$ for the projection map.} It is easy to see that the plane in $H$ spanned by $\ell$ and $\ell'$ 
intersects $K$ along the line $\ell'$ taken with multiplicity 2. This plane corresponds to the projection of 
$\frako$, i.e. the cusp  of the discriminant curve. The cubic surface $S = H\cap K$ is the image of a plane $\Pi$  in $\bbP^3$ which contains the tangent line $L$. Assume that 
$L$ is not tangent to the quadric $V(Q)$. The  restriction of the map $f$ to $\Pi$ is given by the net  of cubic 
curves which are tangent to each other at the point $t$, the base points of the net lie on the conic $V(Q)\cap \Pi$. 
Since $\Pi$  contains the tangent line to $V(Q)$, the conic $V(Q)\cap \Pi$ is tangent to $L$ at $t$, and hence equals the union of two 
lines $l_1,l_2$ intersecting at $t$. The cubic $V(F)$ intersects the conic at $t$ and for additional points $p_1,p_2\in l_1$ and $p_3,p_4\in l_2$. The conic bundle on $S$ contains four singular conics: $2\ell'$, and the  images of the 
reducible conics  $\overline{p_1p_2}+\overline{p_3p_4}, \overline{p_1p_3}+\overline{p_1p_4}, \overline{p_1p_4}+\overline{p_2p_3}$. The surface S has 2 singular points of type $A_1$, one is the node of $K$, and another lies on $\ell'$.  We see that any plane $\Pi$ not tangent to $V(Q)$ has four singular conics.  So, there is only one line intersecting  the discriminant curve at less than 4 points. It corresponds to the plane $\Pi$ tangent to the quadric $V(Q)$ at the point $t$. This proves the assertion.

Let $\calC$ be the coarse moduli space of pairs $(K,\ell)$, where $K$ is a cubic threefold with at one ordinary double point and $\ell$ is a line not containing the singular point. It is known that the map $\calC\to \calM_{5,\cusp}$ which assigns to $(K,\ell)$ the discriminenant curve of the conic bundle defined by $\ell$ is of degree  equal to the number of odd theta characteristics on the normalization of the 
discriminant curve \cite{Beau}, Remark 6.27. The latter number is equal to $2^4\cdot (2^5-1) = 2^4\cdot 31$. Thus our construction defines a rational map of 
degree $2^4\cdot 31$
$$\Psi_2:\calX_4\da \calM_{5,\cusp}.$$
Composing it with the rational map $\Psi_1:\calM_{\En}(2)^{\ndeg}\da \calX_4$ we get a rational map
$$\Psi_2\circ \Psi_1:\calM_{\En}(2)^{\ndeg} \da \calM_{5,\cusp}$$
of degree $2^7\cdot 17\cdot 31$. Comparing this with degree of the map 
$$\Phi_1:\calM_{\En}(2)^{\ndeg}\da \calM_{\En},$$
we propose the following (see \cite{D3}).

\begin{conjecture} Let $\Upsilon$ be the birational isomorphism from \eqref{bir}. Then
$$\Upsilon\circ \Phi_1 = \Psi_2\circ \Psi_1.$$
\end{conjecture}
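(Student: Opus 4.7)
The plan is to prove the conjecture by identifying the K3-covers arising on both sides of the asserted equality. For generic $(X,h)\in\calM_{\En}(2)^{\ndeg}$, $\Upsilon\circ\Phi_1$ produces, by Theorem \ref{cusp1} and the definition of $\Upsilon$, a cuspidal quintic $C_1$ characterized by the property that the K3-cover $\widetilde{X}$ of $X$ is birational to the double cover of $\bbP^2$ branched along $C_1$ together with its cuspidal tangent $L_1$. On the other side, $\Psi_2\circ\Psi_1$ produces by construction a cuspidal quintic $C_2$ arising as the discriminant of the conic bundle on the cubic threefold $K$. The approach is to construct a birational isomorphism between $\widetilde{X}$ and the K3 surface $Y_{C_2,L_2}$ obtained as the minimal resolution of the double cover of $\bbP^2$ branched along $C_2+L_2$; then the Torelli-type theorem together with Lemma \ref{surj1} would identify the corresponding $M$-polarized pairs up to the action of $\O(M)$, forcing $C_1=C_2$ as points of $\calM_{5,\cusp}$. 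The matching degrees $\deg(\Phi_1)=\deg(\Psi_2\circ\Psi_1)=2^{7}\cdot 17\cdot 31$ already provide strong evidence that this strategy is sound.

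Concretely, I would first trace through $\Psi_2\circ\Psi_1$ step by step. The net $|I_W(2)|$ of quadrics containing the branch curve $W\subset\mathsf{D}$ determines the genus-$4$ curve $T$ of singular quadrics together with the marked point $t$ corresponding to the residual intersection of the pencil $\ell=|I_{\mathsf{D}}(2)|$ with the quintic $C$ of singular quadrics; this gives $\Psi_1(X,h)$. Applying $\Psi_2$ produces the nodal cubic threefold $K\subset\bbP^4$ with node $\frako$, the line $\ell$ on $K$ whose image in the ambient $\bbP^3$ is the tangent to $T$ at $t$, and finally the cuspidal quintic $C_2$ as the discriminant of the projection $K'\dashrightarrow\bbP^2$. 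The K3 surface $Y_{C_2,L_2}$ then carries a canonical $M=U\oplus D_9$-polarization given by the configuration of rational curves described before Theorem \ref{cusp1}. The heart of the proof would then be to construct a direct rational map $\widetilde{X}\dashrightarrow\bbP^2$ of degree two factoring through $\phi_h:\widetilde{X}\to\mathsf{D}\subset\bbP^4$ composed with the linear projection $K\dashrightarrow\bbP^2$ from $\ell$, and to verify that its branch locus is precisely $C_2+L_2$.

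The main obstacle is this last identification: keeping careful track of the branch locus of the induced degree-$2$ rational map through the blow-up and blow-down in the conic-bundle construction, and through the quadratic map $\widetilde{X}\to\mathsf{D}$ with its four nodes and the ramification curve $\overline{W}$, requires delicate bookkeeping of exceptional components and their images. A more streamlined alternative is to argue entirely at the period level: by Theorems \ref{main1} and \ref{cusp1}, both compositions descend to $\O(N)$-equivariant period maps into $\calD(N)/\O(N)$ for $N=\la 4\ra\oplus U\oplus E_8$, and it suffices to show that the transcendental Hodge structures of $\widetilde{X}$ and $Y_{C_2,L_2}$ agree up to the lattice juggling $N\leftrightarrow N(1/2)$ used in the proof of Theorem \ref{main1}. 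If even that proves intractable in full generality, the matching degrees and the irreducibility of $\calM_{5,\cusp}$ together reduce the conjecture to a verification at a single explicit point, for instance at an Enriques surface with an enlarged automorphism group where every step of both compositions can be carried out by hand.
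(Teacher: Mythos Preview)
The statement you are attempting to prove is explicitly labeled a \emph{Conjecture} in the paper; there is no proof given. The paper only motivates it by the coincidence of degrees $\deg(\Phi_1)=\deg(\Psi_2\circ\Psi_1)=2^{7}\cdot 17\cdot 31$ and says ``we propose the following.'' So there is nothing to compare your argument against, and the honest assessment is whether your outline would actually close the gap.

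It does not. Two concrete problems. First, your proposed degree-$2$ map $\widetilde{X}\dashrightarrow\bbP^2$ does not arise the way you describe: $\phi_h$ is a degree-$2$ map from the Enriques surface $X$, not from its K3-cover $\widetilde{X}$; composing the K3-cover $\widetilde{X}\to X$ with $\phi_h$ already gives degree $4$ onto $\mathsf{D}$, and projecting the ambient $\bbP^4$ from a line to $\bbP^2$ is a map whose fibers on the cubic $K$ are conics, not points, so the composite has no reason to be a double cover of $\bbP^2$ with branch locus $C_2+L_2$. The whole point of the conjecture is that no such direct geometric link between $\widetilde{X}$ and the plane sextic $C_2+L_2$ is known; the identification $\Upsilon$ is produced purely by the lattice trick of Theorem~\ref{main1}, not by any correspondence of surfaces. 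Second, your fallback---matching degrees plus irreducibility plus agreement at one point---is not a valid reduction. Two dominant rational maps between irreducible varieties can have the same degree and agree at a point (or even along a divisor) without being equal; equality of degrees only says the two maps have fibers of the same size, not that the fibers coincide. Your ``period-level'' alternative restates the problem rather than solving it: showing that the transcendental Hodge structures of $\widetilde{X}$ and $Y_{C_2,L_2}$ match is exactly what is unknown. In short, your proposal identifies what one \emph{would like} to prove but supplies none of the missing geometry, which is why the authors left this as a conjecture.
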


\section{A geometric construction: Coble surfaces} 

We assume that $S$ is an unnodal Coble surface in the sense that it has no $(-2)$-curves. We know that the orthogonal complement of 
$K_S$ in $\Pic(S)$ isomorphic to the Enriques lattice $\bbE$. As in the case of unnodal Enriques surfaces, we consider a 
polarization $h = [F_1+F_2]$ of degree 2. Such polarizations correspond  to  lattice embeddings $U\hookrightarrow \bbE$. One can show 
that the linear system $|2h|$ defines a regular map $\phi_h:S\to \bbP^4$ whose image is a 4-nodal quartic del Pezzo surface $\mathsf{D}$ (see 
\cite{Cantat}). The set of fixed points of the deck transformation $\sigma$ of $S$  consists of a smooth curve $\overline{W}$ of genus 4 and three 
isolated points. The image of the anticanonical curve $\frakC\in |-2K_S|$ is a singular point $q$ of $\mathsf{D}$ (other three singular points are the images of the isolated fixed points). The curve $\overline{W}$ intersects $\frakC$ at two points. Its image $W$ on $\mathsf{D}$ is a 
curve of arithmetic genus 5 with a double point at $q$. It is equal to the complete intersection of a net of quadrics. Thus a Coble surface is obtained as a degeneration of an Enriques surface when the branch curve $W$ of the map $\phi_h$ passes through a singular point of 
$\mathsf{D}$.

Let us look at the discriminant curve of the net $\calN$ of quadrics with base locus equal to $W$. It is a plane quintic with two double points corresponding to corank 2 quadrics in the pencil $|\calO_{\mathsf{D}}(2)|$. What is different here is that one of the double points is a cusp. In fact, it is 
known that the tangent cone of a double point of the discriminant curve corresponding to a corank 2 quadric in the net consists of 
quadric tangent to the singular line $\ell$  of the corank 2 quadric. Since the point $q$ is a base point of the net $\calN$, the restriction of $\calN$ to the singular line $\ell$ has $q$ as a base point. Hence there is only one pencil of quadrics touching $\ell$ at one point (different from $q$). This proves the assertion. Let $T$ be the normalization of the discriminant curve $D$ of $\calN$. As in the case of Enriques surfaces, it is 
isomorphic to the intersection of a nonsingular quadric $V(Q)$ and a cubic surface $V(F)$. The quintic curve $D$ is obtained by 
projection of $T$ from a point $t$ on $T$. Since the projection has a cusp, the point $t$ is a residual point of a ramification point of one of the $g_3^1$ on $T$. So, as in the case of nodal Enriques surfaces we obtain a rational map
$$\Psi_1': \calM_{\Co}(2)\da \calX_4'.$$
The degree of this map is the same as the degree of the map $\Psi_1$, i.e. equal to $2^3\cdot 17$. 

Next we consider the restriction of the map $\Psi_2:\calX_4\da \calM_{5,\cusp}$ to $\calX_4'$. In the notation of the previous section, 
the cubic $V(F)$ intersects the tangent line $L$ at the point $t$ and is tangent at the point $p_2=p_3 = t'$. The 
cubic surface $S$ is the image of the plane $\Pi$ under the rational map given by the net of cubics tangent at the point $t$,  tangent to 
the line $\overline{tt'}$ at $t'$ and passing through the points $p_4,p_5$. So, we have only two singular conics on $S$ in the pencil of conics defined by the hyperplane $H$. This shows that the line defined by $H$ intersects the discriminant curve at two point, with multiplicity 3 at the cusp and multiplicity 2 at some other point. This gives a rational map
$$\Psi_2':\calX_4'\da \calM_{5,\cusp}'.$$
The degree of this map is the same as in the Enriques case, i.e. equal to $2^4\cdot 31$. The composition map
$$\Psi_2'\circ \Psi_1':\calM_{\Co}(2) \da \calM_{5,\cusp}'$$
is of degree $2^7\cdot 17\cdot 31$. On the other hand, since the automorphism group of a general Coble surface is isomorphic to the automorphism group of a general Enriques surface (see \cite{Cantat}), the same count as in \cite{BP} shows that the degree of the rational map
$$\Phi':\calM_{\Co}(2) \da \calM_{\Co}$$
is equal to $2^7\cdot 17\cdot 31$. This suggests the following.

\begin{conjecture} Let $\Upsilon'$ be the birational isomorphism in Theorem \ref{main3}.
Then 
$$\Upsilon'\circ \Phi' = \Psi_2'\circ \Psi_1'.$$
\end{conjecture}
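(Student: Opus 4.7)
The plan is to establish the equality of the two rational maps $\calM_{\Co}(2) \da \calM'_{5,\cusp}$ by verifying that on a general polarized Coble surface $(S,h)$ the cuspidal plane quintic produced by the explicit geometric construction $\Psi_2'\circ\Psi_1'$ and the one predicted by the period-theoretic isomorphism $\Upsilon'\circ \Phi'$ coincide. Since $\Upsilon'$ is built from Hodge periods on the K3 cover $X$ of $S$, the essential task is to identify $X$, endowed with its natural lattice polarization $\langle 2\rangle \oplus \bbE(2)$, with the K3 double cover $Y$ of $\bbP^2$ branched along $D'\cup L$ (the cuspidal quintic produced by the geometric construction together with its cuspidal tangent), compatibly with the lattice rescaling $N(1/2) \supset N'$ used in the proof of Theorem \ref{main1}.

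First I would trace a generic $(S,h)$ through the geometric side: form the 4-nodal quartic del Pezzo $\mathsf{D}$ as the image of $\phi_h$, identify the branch curve $W$ of arithmetic genus 5 (singular at one node of $\mathsf{D}$), extract the discriminant plane quintic $C$ of the net $\calN = |I_W(2)|$ with its node-cusp singularity, and normalize to a pointed genus $4$ curve $(T,t)\in \calX_4'$. Then I would construct the nodal cubic threefold $K$ attached to $(T,t)$, choose the distinguished line $\ell \subset K$, and obtain the conic bundle $K' \to \bbP^2$ whose discriminant is the cuspidal quintic $D'$. At this stage the problem reduces to showing that the K3 double cover $Y$ of $\bbP^2$ branched along $D'\cup L$ is Hodge-isomorphic to the K3 cover $X$ of $S$ in a way compatible with the marked lattice polarizations, and in particular that the induced periods in $\calD(N')/\O(N')$ agree with those predicted by Theorem \ref{main1}.

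The main obstacle is precisely this K3-level identification. One promising route is through intermediate objects: the Prym variety of the étale double cover of $D'$ defined by the conic bundle $K'\to \bbP^2$ is isomorphic to the intermediate Jacobian of $K$, while $K$ itself is canonically reconstructed from the pencil $|I_{\mathsf{D}}(2)|$ of quadrics containing $\mathsf{D}$; this should furnish a Hodge-theoretic bridge between $Y$ and $X$, since both are intrinsically tied to the same pencil of quadrics through $\mathsf{D}$. A more direct strategy would be to exhibit an explicit birational correspondence between $Y$ and $X$ via the two rulings on the smooth quadrics of this pencil, but this requires delicate bookkeeping at the corank $2$ members. In either approach, the decisive step is to make the correspondence between the K3 cover of a Coble surface and the K3 associated to the cuspidal quintic explicit enough to bypass the purely lattice-theoretic identification of Theorem \ref{main1}; once accomplished, the degree count (both sides have degree $2^7\cdot 17\cdot 31$) together with agreement on a single explicit general example forces equality of the rational maps, since two dominant rational maps of equal degree to the same normal variety that coincide on a nonempty open subset must coincide globally.
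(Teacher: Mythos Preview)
The statement you are attempting to prove is labelled in the paper as a \emph{Conjecture}, not a theorem: the authors give no proof of it whatsoever. They arrive at it only as a suggestion, motivated by the numerical coincidence that $\deg(\Psi_2'\circ\Psi_1') = \deg(\Phi') = 2^7\cdot 17\cdot 31$. There is therefore nothing in the paper to compare your proposal against.

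As for the proposal itself, it is not a proof but an outline of a strategy, and you yourself flag the gap: the ``decisive step'' of exhibiting a Hodge isometry between the K3 cover $X$ of the Coble surface and the K3 surface $Y$ built as the double cover of $\bbP^2$ branched along $D'\cup L$ is never carried out. The two routes you sketch (via the Prym/intermediate Jacobian of the conic bundle on the nodal cubic threefold, or via a direct birational correspondence through the pencil of quadrics) are plausible heuristics, but neither is executed, and each involves real difficulties --- for instance, the Prym of a conic bundle over a cuspidal quintic is an abelian variety of dimension at least $4$, and linking it to the weight-two Hodge structure of a specific K3 surface requires more than an isomorphism of intermediate Jacobians. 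Finally, your closing logic is circular: if you already know the two rational maps agree on a nonempty open subset, they agree as rational maps by definition and the degree count is irrelevant; conversely, agreement at a single ``explicit general example'' is agreement at one point, which does not propagate to an open set without further argument. In short, what you have written is a reasonable research plan for attacking the conjecture, not a proof of it, and the paper makes no claim to have such a proof either.
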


  \bibliographystyle{amsplain}

\end{document}